\numberwithin{equation}{section}
\newtheorem{theorem}{Theorem}[section]
\newtheorem{lemma}[theorem]{Lemma}
\DeclareMathOperator{\link}{link}
\begin{document}
\title{On the Space of 2-Linkages}

\author{Guantao Chen}
\address{Department of Mathematics and Statistics, Georgia State University, Atlanta, Georgia , USA}
\author{Hein van der Holst}
  \address{Department of Mathematics and Statistics, Georgia State University, Atlanta, Georgia , USA}
\author{Serguei Norine}
 \address{Department of Mathematics and Statistics,  McGill University, Montreal, QC, H3A 2K6, Canada}
\author{Robin Thomas}
 \address{School of Mathematics, Georgia Institute of Technology, Atlanta, Georgia 30332-0160, USA}

\begin{abstract}
Let $G=(V,E)$ be a finite undirected graph. If $P$ is an oriented path from $r_1\in V$ to $r_2\in V$, we define $\partial(P) = r_2-r_1$. If $R, S\subseteq V$,  we denote by $P(G; R, S)$ the span of the set of all $\partial P\otimes \partial Q$ with $P$ and $Q$ disjoint oriented paths of $G$ connecting vertices in $R$ and $S$, respectively. By $L(R, S)$, we denote the submodule of $\mathbb{Z}\langle R\rangle\otimes\mathbb{Z}\langle S\rangle$ consisting all $\sum_{r\in R, s\in S} c(r,s)r\otimes s$ such that $c(r,r) = 0$ for all $r\in R\cap S$, $\sum_{r\in R} c(r, s) = 0$ for all $s\in S$, and $\sum_{s\in S} c(r, s) = 0$ for all $r\in R$. 

In this paper, we provide, when $G$ is sufficiently connected, characterizations when $P(G; R, S)$ is a proper subset of $L(R, S)$. 
\end{abstract}

\maketitle


\section{Introduction}\label{sec:intof}

Let $R$ be a finite set. By $\mathbb{Z}\langle R\rangle$, we denote the free $\mathbb{Z}$-module generated by $R$; that is, $\mathbb{Z}\langle R\rangle$ is the module of all $\sum_{r\in R} c(r) r$ with $c(r)\in\mathbb{Z}$. By $\mathbb{Z}\langle R\rangle^{\otimes 2}$, we denote the tensor product of $\mathbb{Z}\langle R\rangle$ and $\mathbb{Z}\langle R\rangle$; that is, $\mathbb{Z}\langle R\rangle^{\otimes 2}$ is the module of all $\sum_{r.s\in R,s \in R} c(r\otimes s) r\otimes s$, where $c(r\otimes s) \in \mathbb{Z}$. Observe that we can identify elements $c\in \mathbb{Z}\langle R\rangle^{\otimes 2}$ with linear forms $c : \mathbb{Z}\langle R\rangle\otimes \mathbb{Z}\langle R\rangle\to \mathbb{Z}$.

Suppose that $R$ is a finite set with a cyclic ordering $r_1,r_2,\ldots,r_n$. So the successor of $r_i$ in this cyclic ordering is $r_{i+1}$ if $i\not=n$, and $r_1$ if $i=n$. If $u,v \in R$ are distinct elements, we define 
\begin{equation*}
\link(u\otimes v; r_1,r_2,\ldots, r_n) = \begin{cases}
1 & \text{if $v$ comes after $u$ in the sequence $r_1,\ldots,r_n$}\\
-1 & \text{if $v$ comes before $u$ in the sequence $r_1,\ldots,r_n$.}
\end{cases}
\end{equation*}
Let $D(R) = \{\sum_{r,s\in R} c(r\otimes s) r\otimes s\in \mathbb{Z}\langle R\rangle^{\otimes 2}~|~c(r\otimes r) = 0\text{ for all }r\in R\}$. 
We extend $\link(\cdot; r_1,r_2,\ldots,r_n)$ linearly to $D(R)$. It is easy to see that,
for all $c\in D(R)$, 
\begin{equation*}
\link(c; r_1,r_2\ldots,r_n) = \link(c; r_n,r_1,\ldots,r_{n-1})
\end{equation*} 
if and only if, for all $r\in R$,
\begin{equation*}
 \sum_{s\in R} c(r\otimes s) = 0\text{ and }\sum_{s\in R} c(s\otimes r) = 0.
\end{equation*}
We denote by $L(R)$ the submodule of $D(R)$ of all $c\in D(R)$ such that,  for all $r\in R$,
\begin{equation*}
\sum_{s\in R} c(r\otimes s) = 0\text{ and } \sum_{s\in R} c(s\otimes r) = 0.
\end{equation*}
Then, if $c\in L(R)$, $\link(c; r_1,r_2\ldots,r_n)$ is independent of 
where we start the cyclic ordering of $r_1,r_2,\ldots,r_n$.

Let $R=\{r_1,r_2,\ldots,r_n\}$ with a cyclic ordering $r_1,r_2,\ldots,r_n$. If $u_1,u_2,v_1,v_2\in R$, then 
\begin{equation*}
c(u_1u_2; v_1v_2) := (u_1-u_2)\otimes (v_1-v_2) \in L(R).
\end{equation*}
Suppose $u_1,u_2,v_1,v_2$ are distinct.
 If $u_1,u_2,v_1,v_2$ occur in this order in $r_1,r_2,\ldots,r_n$, then 
 \begin{equation*}
 \link(c(u_1u_2; v_1v_2); r_1,r_2,\ldots,r_n) = 0,
 \end{equation*}
 while, if $u_1,v_1,u_2,v_2$  occur in this order in  $r_1,r_2,\ldots,r_n$, then 
 \begin{equation*}
 \link(c(u_1u_2; v_1v_2); r_1,r_2,\ldots,r_n) = 2,
 \end{equation*} 
 and if $u_2,v_1,u_1,v_2$ occur in this order in  $r_1,r_2,\ldots,r_n$, then 
 \begin{equation*}
 \link(c(u_1u_2;v_1v_2); r_1,r_2,\ldots,r_n) = -2.
 \end{equation*}
If $u_1,u_2,u_3\in R$, then 
\begin{equation*}
c(u_1,u_2,u_3) := u_1\otimes u_2 - u_1\otimes u_3 + u_2\otimes u_3 - u_2\otimes u_1 + u_3\otimes u_1 - u_3\otimes u_2\in L(R).
\end{equation*} 
Suppose $u_1,u_2,u_3$ are distinct.
If $u_1,u_2,u_3$ occur in this order in $r_1,r_2,\ldots,r_n$, then 
\begin{equation*}
\link(c(u_1,u_2,u_3); r_1,r_2,\ldots,r_n) = 2,
\end{equation*}
while, if $u_3,u_2,u_1$ occur in this order in $R$, then 
\begin{equation*}
\link(c(u_1,u_2,u_3); r_1,r_2,\ldots,r_n) = -2.
\end{equation*}

If $R, S$ are finite sets, we denote by $L(R, S)$ the submodule of $L(R\cup S)$ of all $c\in L(R\cup S)$ with $c(r,s)=0$ if $r\not\in R$ or $s\not\in S$. If $u_1,u_2\in R$ and $s_1,s_2\in S$ are distinct, then $c(u_1u_2;v_1v_2)\in L(R, S)$. Observe that $c(u_1,u_2,u_3)\in L(R,S)$ only if $u_1,u_2,u_3\in R\cap S$.

Let $G=(V,E)$ be a finite graph. If $P$ is an oriented path in $G$ from $u_2$ to $u_1$, then 
\begin{equation*}
\partial(P) := u_1-u_2 \in \mathbb{Z}\langle V\rangle.
\end{equation*}
If $R_1, R_2\subseteq V$, an \emph{$(R_1; R_2)$-linkage} in $G$ is a pair of disjoint paths $(P_1, P_2)$ in $G$ such that, for $i=1,2$, $P_i$ connects vertices in $R_i$. Let $u_1,u_2,v_1,v_2$ be vertices in $V$. A \emph{$(u_1u_2; v_1v_2)$-linkage} in $G$ is a pair of disjoint paths $(P_1, P_2)$ in $G$ such that $P_1$ connects $u_1$ and $u_2$, and $P_2$ connected $v_1$ and $v_2$ . If $R_1, R_2\subseteq V$, we denote by $P(G; R_1, R_2)$ the span of all $\partial(P_1)\otimes\partial(P_2)\in \mathbb{Z}\langle R_1\rangle\otimes\mathbb{Z}\langle R_2\rangle$ where $(P_1, P_2)$ are $(R_1; R_2)$-linkages in $G$.  Here we give $P_1$ and $P_2$ arbitrary orientations. Clearly, $P(G; R_1, R_2) \subseteq L(R_1, R_2)$. 

A \emph{separation} of a graph $G$ is a pair $(G_1, G_2)$ of subgraphs with $G_1\cup G_2 = G$ and $E(G_1)\cap E(G_2)=\emptyset$. The \emph{width} of a separation $(G_1, G_2)$ is $k := V(G_1\cap G_2)$. We call a separation of width $k$ a $k$-separation.

Let $R,S\subseteq V$. We call a $2$-separation $(G_1,G_2)$ of $G$ a \emph{sided $(R,S)$-separation} if  $|V(G_2) - V(G_1)| > 0$, $(V(G_1)-V(G_2))\cap S = \emptyset$, and $G_1-V(G_2)$ contains at least two vertices $u_1, u_2$ from $R$ and there are $(u_1w_1; u_2w_2)$- and $(u_1w_2; u_2w_1)$-linkages in $G_1$, where $\{w_1,w_2\}=V(G_1\cap G_2)$. A  graph $G$ is \emph{$(R,S)$-connected} if $G$ is $2$-connected and $G$ has no sided $(R,S)$- and no sided $(S,R)$-separations.

Let $G=(V,E)$ be a graph. If $X\subseteq V$, we denote by $N_G(X)$ the set of vertices in $V-X$ that are adjacent to a vertex in $X$. If it is clear what graph we mean, we also write $N(X)$ for $N_G(X)$.
We call a family $\mathcal{A} = \{A_1,\ldots,A_k\}$ of pairwise disjoint subsets of $V$ \emph{separated} if for $1\leq i\not=j\leq k, N(A_i)\cap A_j=\emptyset$. If $\mathcal{A}$ is a separated family of subsets of $V$, we denote by $p(G, \mathcal{A})$ the graph  obtained from $G$ by deleting $A_i$ for each $i$ and adding new edges joining every pair of distinct vertices in $N(A_i)$.

A \emph{nearly planar} graph is a pair $(G, \mathcal{A})$, where $G=(V,E)$ is a graph and $\mathcal{A} = \{A_1,\ldots,A_k\}$ is a separated family of subsets of $V$ such that for each $A_i$, $|N(A_i)|\leq 3$ and 
$p(G, \mathcal{A})$ can be drawn in a closed disc $D$ with no pair of edges crossing and for each $A_i$ with $|N(A_i)|=3$, $N(A_i)$ induces a facial triangle in $p(G, \mathcal{A})$. If, in addition, $r_1,\ldots,r_n$ are vertices in $G$ such that $r_i\not\in A_j$ for any $A_j\in \mathcal{A}$ and $r_1,\ldots,r_n$ occur on the boundary of $D$ in the specified order, then we say that $(G, \mathcal{A},r_1,\ldots,r_n)$ is \emph{nearly planar}.
If there is no need to specify the family $\mathcal{A}$, we also write $(G, r_1, \ldots,r_n)$ for $(G, \mathcal{A},r_1,\ldots,r_n)$. 

Let $t_1,\ldots,t_n$ be a cyclic ordering and suppose $R, S\subseteq \{t_1,\ldots,t_n\}$ such that $R\cup S=\{t_1,\ldots,t_n\}$. We say that $R$ and $S$ \emph{cross} if $R=S$ and $|R|=3$, or there exist vertices $u_1,u_2\in R$ and vertices $v_1,v_2\in S$ such that $u_1,v_1,u_2,v_2$ are distinct and occurring in this order in the cyclic ordering $t_1,\ldots,t_n$.

In this paper, we prove the following theorem.

\begin{theorem}\label{thm:K33}
Let $G=(V,E)$ be a graph, let $R,S\subseteq V$ and suppose $G$ is $(R,S)$-connected. Then the following are equivalent:
\begin{enumerate}
\item\label{item:i1} $L(R, S) - P(G; R, S)\not=\emptyset$;
\item\label{item:i2} there exists a cyclic ordering $t_1,\ldots,t_n$ of the vertices in $R\cup S$ such that $R$ and $S$ cross and $(G, t_1,\ldots,t_n)$ is nearly planar; and
\item\label{item:i3} there exists a cyclic ordering $t_1,\ldots,t_n$ of the vertices in $R\cup S$ such that $R$ and $S$ cross and, for all $z\in L(R, S)$, $\link(z; t_1,\ldots, t_n) = 0$ if and only if $z \in P(G; R, S)$.
\end{enumerate}
\end{theorem}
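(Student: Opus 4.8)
The plan is to prove the three conditions equivalent by establishing the cyclic chain $(2)\Rightarrow(3)\Rightarrow(1)\Rightarrow(2)$. Throughout I fix a crossing cyclic ordering $t_1,\ldots,t_n$ of $R\cup S$ and write $\lambda(z)=\link(z;t_1,\ldots,t_n)$ for the resulting linear form on $L(R,S)$. The geometric fact I will lean on repeatedly is that two disjoint oriented paths whose endpoints lie on the boundary circle of a disc, drawn without crossings, necessarily have unlinked endpoints, so that $\lambda(\partial P_1\otimes\partial P_2)=0$.

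For $(3)\Rightarrow(1)$, since $R$ and $S$ cross there are either distinct $u_1,u_2\in R$ and $v_1,v_2\in S$ with $u_1,v_1,u_2,v_2$ occurring in this cyclic order, or else $R=S$ with $|R|=3$. In the first case the computation recorded in the introduction gives $\lambda(c(u_1u_2;v_1v_2))=\pm 2$, and in the second $\lambda(c(u_1,u_2,u_3))=\pm 2$; in either case the exhibited element of $L(R,S)$ has nonzero link, so by $(3)$ it cannot lie in $P(G;R,S)$, whence $L(R,S)-P(G;R,S)\neq\emptyset$.

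For $(2)\Rightarrow(3)$, fix the nearly planar data $(G,\mathcal{A},t_1,\ldots,t_n)$ drawn in a disc. The inclusion $P(G;R,S)\subseteq\ker\lambda$ is the easy direction: given an $(R;S)$-linkage $(P_1,P_2)$, I would contract each $A_i$ and route the images in $p(G,\mathcal{A})$, where disjointness forces the boundary endpoints to be non-crossing, so $\lambda(\partial P_1\otimes\partial P_2)=0$; this extends linearly to all of $P(G;R,S)$. The reverse inclusion $\ker\lambda\subseteq P(G;R,S)$ is the crux of this implication. I would first exhibit a generating set of $\ker\lambda$ consisting of the elementary elements $c(u_1u_2;v_1v_2)$, together with $c(u_1,u_2,u_3)$ when $R=S$ and $|R|=3$, whose terminals sit in non-crossing cyclic position, and then show that each such generator is realized by an actual linkage in $G$. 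Realizability is where the hypotheses do the work: $2$-connectivity and the absence of sided $(R,S)$- and $(S,R)$-separations rule out the small cuts that would obstruct routing two disjoint non-crossing paths between prescribed boundary terminals, while the blobs $A_i$ with $|N(A_i)|\le 3$ supply the routing freedom through the contracted parts. Combining the two inclusions gives $P(G;R,S)=\ker\lambda$, which is precisely $(3)$.

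The main obstacle is $(1)\Rightarrow(2)$: assuming $P(G;R,S)$ is a proper submodule of $L(R,S)$, I must produce a crossing cyclic ordering in which $(G,t_1,\ldots,t_n)$ is nearly planar. I would argue by induction on a minimal counterexample, using $(R,S)$-connectivity to control its cut structure and taking care that the reductions preserve the connectivity hypotheses. The strategy is to show that properness of $P(G;R,S)$ forces a genuine planar obstruction: if $G$ contained a rooted $K_{3,3}$-type structure linking the terminals in a crossing pattern, one could assemble disjoint paths realizing enough linkage classes to span all of $L(R,S)$, contradicting properness. Excluding such configurations, together with the connectivity hypotheses that forbid the reductive separations, pins $G$ down to a graph that embeds in a disc after contracting separated blobs with at most three neighbors, with the terminals on the boundary in a crossing order -- exactly the nearly planar conclusion. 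Establishing this dichotomy, "either span all of $L(R,S)$ or embed in the disc," is the heart of the argument and is where the homological machinery for $2$-linkages and a Seymour--Shiloach--Thomassen style analysis of disjoint-paths obstructions must be brought to bear.
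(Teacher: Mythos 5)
Your chain of implications is the same as the paper's (the three implications $(1)\Rightarrow(2)\Rightarrow(3)\Rightarrow(1)$, merely listed in a different starting order), and the two easy steps --- $(3)\Rightarrow(1)$ via a crossing tensor of nonzero link, and the inclusion $P(G;R,S)\subseteq\ker\lambda$, where $\lambda=\link(\,\cdot\,;t_1,\ldots,t_n)$, for nearly planar $(G,t_1,\ldots,t_n)$ --- are correct and agree with the paper. The trouble is that in both hard steps the decisive argument is asserted rather than given. In $(2)\Rightarrow(3)$, your inclusion $\ker\lambda\subseteq P(G;R,S)$ rests on the claim that $\ker\lambda$ is generated by elementary tensors whose terminals are in non-crossing position, together with realizability of each such tensor by an actual linkage (the latter is the unnumbered linkage lemma opening the last section of the paper, and needs only $2$-connectivity). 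The generation claim is true, but it is not a routine reduction: it carries essentially the entire content of the implication. The paper never proves it abstractly; instead it decomposes $c\in\ker\lambda$ into arbitrary elementary tensors (Lemma~\ref{lem:circuitandkurcycles}), pairs off tensors with cancelling links, and shows by an induction that replaces one terminal at a time --- each step justified by a linkage supplied by $2$-connectivity --- that each such combination lies in $P(G;R,S)$ (Lemmas~\ref{lem:skewK33} and~\ref{lem:Kuratowskiconnected}, with Lemma~\ref{lem:Kuratowskiconnected2} to eliminate triple tensors). Your plan needs an equivalent induction, and nothing in the proposal indicates how it would go. (Also, a triple tensor $c(u_1,u_2,u_3)$ never has zero link, so in the case $R=S$, $|R|=3$ the correct statement is $\ker\lambda=\{0\}=P(G;R,S)$; triples cannot serve as non-crossing generators.)

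The more serious gap is $(1)\Rightarrow(2)$, where what you wrote is a restatement of the desired dichotomy (``span all of $L(R,S)$ or embed in the disc'') plus a pointer to disjoint-paths technology, not an argument. The paper's proof (Theorem~\ref{thm:3planarboundary}) runs as follows: by Lemma~\ref{lem:circuitandkurcycles}, properness yields an elementary tensor outside $P(G;R,S)$; triples are traded for quadruples by Lemma~\ref{lem:Kuratowskiconnected2}; if $c(u_1u_2;v_1v_2)\notin P(G;R,S)$ then in particular $G$ has no $(u_1u_2;v_1v_2)$-linkage, so Seymour's theorem (Theorem~\ref{thm:3planar}) makes $(G;u_1,v_1,u_2,v_2)$ nearly planar --- but only with respect to those four vertices. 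The substantive work you omit entirely is the extension step: adding the remaining vertices of $R\cup S$ to the disc boundary one at a time (Lemma~\ref{internRandS}, resting on the bridge analysis of Lemmas~\ref{lem:K23d0}--\ref{lem:midsep}), where each step either forces $c(u_1u_2;v_1v_2)\in P(G;R,S)$ (contradiction), or produces a sided $(R,S)$- or $(S,R)$-separation (this is exactly the point where the $(R,S)$-connectivity hypothesis is consumed), or enlarges the cyclic ordering. Near-planarity relative to four terminals does not by itself give statement (2), and your proposal does not indicate how to bridge that distance.
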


\section{Elementary tensors}
An \emph{elementary $(R,S)$-tensor} is a tensor of the form $c(u_1u_2;v_1v_2)$ with $u_1,u_2\in R$ and $v_1,v_2\in S$ and $u_1,u_2,v_1,v_2$ distinct, or a tensor of the form $c(u_1,u_2,u_3)$ with $u_1,u_2,u_3\in R\cap S$ and $u_1,u_2,u_3$ distinct.
In this section, we show that each element in $L(R, S)$ can be written as a sum of elementary $(R,S)$-tensors. The proof follows the proof of Theorem~7 in \cite{MR2482969}.

\begin{lemma}\label{lem:circuitandkurcycles}
Let $R,S$ be finite sets. Then $L(R, S)$ is generated by elementary $(R, S)$-tensors.
\end{lemma}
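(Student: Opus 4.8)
The plan is to realize an arbitrary $c\in L(R,S)$ as an integer circulation in an auxiliary bipartite digraph, decompose it into simple directed cycles, and then express each cycle as a sum of elementary $(R,S)$-tensors by induction on its length. First I would regard $c$ as an integer-valued function on $R\times S$ that vanishes on the diagonal $\{(r,r):r\in R\cap S\}$ and has every row sum and every column sum equal to $0$. Form the bipartite digraph $H$ whose vertex classes are a copy of $R$ and a copy of $S$ (so each $t\in R\cap S$ yields two distinct vertices), placing $|c(r,s)|$ parallel arcs between the copy of $r$ and the copy of $s$, oriented $r\to s$ when $c(r,s)>0$ and $s\to r$ when $c(r,s)<0$. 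Vanishing of the row and column sums says exactly that $H$ is balanced at every vertex, so $H$ is Eulerian and its arc set decomposes into simple directed cycles. Since $H$ is bipartite, such a cycle alternates classes, $r_1\to s_1\to r_2\to\cdots\to r_k\to s_k\to r_1$, with $r_1,\dots,r_k$ distinct in $R$ and $s_1,\dots,s_k$ distinct in $S$; it contributes the element
\[
z_C=\sum_{i=1}^{k}(r_i-r_{i+1})\otimes s_i\qquad(r_{k+1}:=r_1),
\]
and summing the $z_C$ over the decomposition recovers $c$. Thus it suffices to write each $z_C$ as a sum of elementary $(R,S)$-tensors.

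Each arc of $C$ comes from a nonzero, hence off-diagonal, entry of $c$, so $r_i\ne s_i$ and $r_{i+1}\ne s_i$ for all $i$. I would then induct on $k$. For $k=2$ one computes $z_C=(r_1-r_2)\otimes(s_1-s_2)=c(r_1r_2;s_1s_2)$, and the off-diagonal conditions force $r_1,r_2,s_1,s_2$ to be distinct, so this is an elementary tensor. For $k\ge 3$ the engine is a splitting identity: if $r_1\ne s_j$ for some $2\le j\le k-1$, then inserting the (off-diagonal) chord between $r_1$ and $s_j$ cuts $C$ into the two shorter alternating cycles $C_1=r_1\to s_1\to\cdots\to r_j\to s_j\to r_1$ and $C_2=r_1\to s_j\to r_{j+1}\to\cdots\to s_k\to r_1$; the forward and backward uses of this chord give two occurrences of $r_1\otimes s_j$ that cancel, yielding $z_C=z_{C_1}+z_{C_2}$ with both cycles of length in $\{2,\dots,k-1\}$. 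The inductive hypothesis then disposes of the two pieces.

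The one thing that can fail is locating an admissible chord, and I expect this to be the crux: the split at $r_1,s_j$ is legitimate only when $r_1\ne s_j$, and a priori every candidate chord could be diagonal. When $k\ge 4$ this cannot happen, since the $s_j$ are distinct, so $r_1$ coincides with at most one of the $k-2\ge 2$ elements $s_2,\dots,s_{k-1}$, leaving an admissible $j$. The genuinely exceptional configuration is $k=3$ with $r_i=s_{i+1}$ for every $i$; a short computation then identifies $z_C$ with $-c(r_1,r_2,r_3)$, and since in this case $r_1,r_2,r_3\in R\cap S$, this is a (three-term) elementary tensor. Any $k=3$ cycle not of this form has $r_i\ne s_{i+1}$ for some $i$, so after rotating the cycle the splitting identity applies. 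Combining the cases $k=2$, the exceptional $k=3$, and the reducible $k\ge 3$ completes the induction, whence every $z_C$, and therefore $c$ itself, is a sum of elementary $(R,S)$-tensors.
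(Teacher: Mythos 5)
Your proof is correct, and it takes a genuinely different route from the paper. The paper argues by a greedy ``height'' induction: it orders $R$ as $r_1,\ldots,r_k$ and $S$ as $s_1,\ldots,s_m$ so that the elements of $R\cap S$ come first, defines the height of $c$ as the minimum of $i+j$ over the nonzero entries $c(r_i\otimes s_j)$, and, picking a minimal entry, subtracts a single elementary tensor to kill it: $c(r_ir_{i'};s_js_{j'})$ when the auxiliary indices satisfy $r_{i'}\neq s_{j'}$, and the three-term tensor $c(r_i,s_j,r_{i'})$ when $r_{i'}=s_{j'}$ (the chosen ordering guarantees that in this collapsed case all three points lie in $R\cap S$); a counterexample of maximal height then yields a contradiction. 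Your argument instead identifies $L(R,S)$ with integer circulations on a bipartite digraph, decomposes into simple directed cycles, and reduces each cycle by the chord-splitting identity $z_C=z_{C_1}+z_{C_2}$, with the three-term tensors entering exactly in the exceptional triangle $r_i=s_{i+1}$, where indeed $z_C=-c(r_1,r_2,r_3)=c(r_2,r_1,r_3)$. Both hinge on the same two facts (zero row/column sums, zero diagonal), but they buy different things: the paper's elimination is shorter and needs no graph-theoretic input, while yours gives a structural picture --- generators of $L(R,S)$ correspond to alternating $4$-cycles and ``twisted'' $6$-cycles of a circulation --- and localizes the role of $R\cap S$ more transparently. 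Two small points of hygiene in your write-up: you only need ``balanced implies the arc set is a disjoint union of simple directed cycles'' (calling $H$ Eulerian presumes connectivity, which you neither have nor need); and the induction should be (and implicitly is) formulated for abstract alternating sequences with distinct $r$'s, distinct $s$'s, and the off-diagonality conditions, since the chord arcs you insert need not be arcs of $H$. Neither affects correctness.
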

\begin{proof}
Order the elements in $R$ as $r_1,\ldots, r_k$ such that we start with the elements that also belong to $S$. Similary, we order the elements in $S$ as $s_1,\ldots,s_m$ such that we start with the elements that also belong to $R$.
For every $c = \sum_{r\in R, s\in S} c(r\otimes s) r\otimes s \in L(R, S)$, we define the height of $c$ as the minimum of $i+j$ over $c(r_i\otimes s_j)\not=0$.  Suppose for a contradiction that there exists a $c\in L(R, S)$ that is not a sum of elementary $(R, S)$-tensors. Choose such a $c$ whose height is maximal. Then the height of $c$ is not greater than $k+m$.

Let $i$ and $j$ with $c(r_i\otimes s_j)\not=0$ and $i+j$ minimum. Let $r_{i'}\in R$ be an element unequal to $r_i$ such that $c(r_{i'}\otimes s_j)\not=0$, and let $s_{j'} \in S$ be an element unequal to $s_j$ such that $c(r_i\otimes s_j')\not=0$. Then $r_i\not=s_j$, $r_i\not=s_{j'}$, and $r_{i'}\not=s_j$.

Suppose that $r_{i'}\not=s_{j'}$. Then $c' = c-c(r_i\otimes s_j)c(r_ir_{i'};s_js_{j'})\in L(R, S)$ is not a sum of elementary $(R, S)$-tensors. As the height of $c'$ is larger than the height of $c$, we obtain a contradiction. 

Therefore $r_{i'}=s_{j'}$. Then, by choice of the ordering the elements $r_1,\ldots,r_k$ and $s_1,\ldots,s_m$ and by the minimality of $i+j$, $r_i\in S$ and $s_j\in R$. So each of $r_i, s_j, r_{i'}=s_{j'}$ belongs to both $R$ and $S$. Then $c' = c-c(r_i\otimes s_j)c(r_i,s_j,r_{i'})\in L(R, S)$ is not a sum of elementary $(R, S)$-tensors. As the height of $c'$ is larger than the height of $c$, we obtain a contradiction. 
\end{proof}

\section{Sums of Linkages}

Let $G=(V,E)$ be a graph and let $u_1,u_2, v_1,v_2$ be distinct vertices of $G$. In this section, we show that the existence of certain linkages in $G$ implies that $c(u_1u_2; v_1v_2)\in P(G; R,S)$.

\begin{lemma}\label{lem:K23d0linkage}
	Let $G=(V,E)$ be a graph and let $R, S\subseteq V$.  Let  $u_1,u_2, u\in R$ and $v_1,v_2\in S$. If $G$ contains a $(u_1u; v_1v_2)$- and a $(uu_2; v_2v_1)$-linkage, then $c(u_1u_2; v_1v_2)\in P(G; R, S)$.
\end{lemma}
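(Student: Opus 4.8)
The plan is to exhibit $c(u_1u_2;v_1v_2)$ as a sum of two linkage tensors, one coming from each of the two given linkages, with orientations chosen so that the tensors telescope. Recall that $c(u_1u_2;v_1v_2)=(u_1-u_2)\otimes(v_1-v_2)$, and that $P(G;R,S)$ is by definition the span — hence a submodule closed under addition — of the tensors $\partial(P_1)\otimes\partial(P_2)$ arising from $(R;S)$-linkages $(P_1,P_2)$.

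First I would fix compatible orientations. Let $(P_1,P_2)$ be the given $(u_1u;v_1v_2)$-linkage; orient $P_1$ from $u$ to $u_1$ and $P_2$ from $v_2$ to $v_1$, so that $\partial(P_1)=u_1-u$ and $\partial(P_2)=v_1-v_2$. Since $u_1,u\in R$ and $v_1,v_2\in S$, the pair $(P_1,P_2)$ is an $(R;S)$-linkage, and therefore $\partial(P_1)\otimes\partial(P_2)=(u_1-u)\otimes(v_1-v_2)\in P(G;R,S)$. Similarly, let $(Q_1,Q_2)$ be the given $(uu_2;v_2v_1)$-linkage; orient $Q_1$ from $u_2$ to $u$ and $Q_2$ from $v_2$ to $v_1$, giving $\partial(Q_1)=u-u_2$ and $\partial(Q_2)=v_1-v_2$, so that $\partial(Q_1)\otimes\partial(Q_2)=(u-u_2)\otimes(v_1-v_2)\in P(G;R,S)$.

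Adding the two tensors, the second factors agree while the first factors telescope:
\[
(u_1-u)\otimes(v_1-v_2)+(u-u_2)\otimes(v_1-v_2)=(u_1-u_2)\otimes(v_1-v_2)=c(u_1u_2;v_1v_2).
\]
Because $P(G;R,S)$ is a submodule and hence closed under addition, it follows that $c(u_1u_2;v_1v_2)\in P(G;R,S)$, as claimed.

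There is no substantive obstacle here; the statement is essentially a bookkeeping identity. The only point requiring attention is to orient the two $S$-paths $P_2$ and $Q_2$ in the same sense (both from $v_2$ to $v_1$) so that their boundaries coincide as $v_1-v_2$; this common second factor is exactly what allows the two $R$-boundaries to telescope into $u_1-u_2$. If one instead chose opposite orientations for the two $S$-paths, the contributions would cancel in the wrong factor, so keeping the orientations consistent is the one thing to verify carefully.
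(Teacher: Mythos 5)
Your proof is correct and is essentially identical to the paper's: both arguments use the two given linkages to place the tensors $(u_1-u)\otimes(v_1-v_2)$ and $(u-u_2)\otimes(v_1-v_2)$ in $P(G;R,S)$ and then add them so the first factors telescope. The paper merely phrases the same computation in the $c(\cdot\,;\cdot)$ notation, writing $c(u_1u_2;v_1v_2)=c(u_1u;v_1v_2)+c(uu_2;v_1v_2)$.
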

\begin{proof}
	Since $G$ has a $(u_1u; v_1v_2)$-linkage, $c(uu_1;v_2v_1)\in P(G; R, S)$.
	Since $G$ has a $(uu_2; v_2v_1)$-linkage, $c(uu_2;v_1v_2)\in P(G; R, S)$.
	Then $c(u_1u_2;v_1v_2) =  c(u_1u;v_1v_2)+c(uu_2;v_1v_2)\in P(G; R, S)$.
\end{proof}

\begin{lemma}\label{lem:misses1ofv1v2linkage}
	Let $G=(V,E)$ be a graph and let $R, S\subseteq V$. Let $u_1,u_2, u\in R$ and $v_1,v_2, v\in S$. If $G$ contains a $(u_1u;v_1v_2)$-, a $(u u_2;vv_1)$-, and a $(u_2u; v_2v)$-linkage, then $c(u_1u_2; v_1v_2)\in P(G; R,S)$.
\end{lemma}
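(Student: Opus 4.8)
The plan is to mimic the proof of Lemma~\ref{lem:K23d0linkage}: first translate each hypothesized linkage into the membership of an elementary tensor in $P(G;R,S)$, and then exhibit $c(u_1u_2;v_1v_2)$ as an explicit integer combination of those three tensors.

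First I would observe that the existence of a $(u_1u;v_1v_2)$-linkage yields, after orienting the two disjoint paths appropriately, an element $\partial(P_1)\otimes\partial(P_2) = (u_1-u)\otimes(v_1-v_2) = c(u_1u;v_1v_2)$ of $P(G;R,S)$; since $P(G;R,S)$ is a span it is closed under negation, so the choice of orientation is immaterial. In the same way the $(uu_2;vv_1)$-linkage gives $c(uu_2;vv_1)\in P(G;R,S)$ and the $(u_2u;v_2v)$-linkage gives $c(u_2u;v_2v)\in P(G;R,S)$.

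The heart of the argument is then the bilinear identity
\begin{equation*}
c(u_1u_2;v_1v_2) = c(u_1u;v_1v_2) + c(u_2u;v_2v) - c(uu_2;vv_1),
\end{equation*}
which I would verify by expanding each term as a tensor and telescoping. Concretely, splitting the first factor as $u_1-u_2=(u_1-u)+(u-u_2)$ reduces matters to showing $(u-u_2)\otimes(v_1-v_2)\in P(G;R,S)$; and splitting the second factor as $v_1-v_2=(v-v_2)-(v-v_1)$ expresses this remaining term as $c(u_2u;v_2v)-c(uu_2;vv_1)$, both of whose summands already lie in $P(G;R,S)$ by the previous paragraph.

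Since each summand on the right-hand side belongs to $P(G;R,S)$ and $P(G;R,S)$ is a $\mathbb{Z}$-module, the left-hand side $c(u_1u_2;v_1v_2)$ does as well. There is essentially no obstacle beyond locating the correct telescoping; the verification of the identity is a routine expansion of tensors, and the only point requiring a moment's care is keeping track of the signs introduced by reorienting paths, equivalently the antisymmetry $(x-y)\otimes(w-z) = (y-x)\otimes(z-w)$ that identifies $c(u_2u;v_2v)$ with $(u-u_2)\otimes(v-v_2)$.
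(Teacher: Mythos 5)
Your proof is correct and takes essentially the same approach as the paper: translate each hypothesized linkage into membership of an elementary tensor in $P(G;R,S)$, then exhibit $c(u_1u_2;v_1v_2)$ as an integer combination of those three tensors. In fact your telescoping identity $c(u_1u_2;v_1v_2)=c(u_1u;v_1v_2)+c(u_2u;v_2v)-c(uu_2;vv_1)$ gets the signs right, whereas the paper's displayed identity sums all three tensors with coefficient $+1$ (off by $2c(u_2u;v_1v)$); that slip is harmless because $P(G;R,S)$ is a $\mathbb{Z}$-module and hence closed under negation, but your version is the exact one.
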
 
\begin{proof}
	Since $G$ contains a $(u_1u;v_1v_2)$-linkage, $c(uu_1; v_2v_1)\in P(G; R, S)$.
	Since $G$ contains a $(u u_2;vv_1)$-linkage, $c(u_2u;v_1v)\in P(G; R, S)$.
	Since $G$ contains a $(u_2u; v_2v)$-linkage, $c(uu_2;vv_2)\in P(G; R, S)$.
	Then $c(u_1u_2; v_1v_2) = c(uu_1; v_2v_1) + c(u_2u;vv_1) +c(uu_2;vv_2) \in P(G; R,S)$. 
\end{proof}

\begin{lemma}\label{lem:2sidea}
	Let $G=(V,E)$ be a graph and let $R,S\subseteq V$. Let $u_1,u_2,u\in R$ and $v_1,v_2,v',v''\in S$. If $G$ contains a $(u_2u_1;v'v_1)$-, a $(u u_2; v' v_2)$-, a $(u u_1; v' v_1)$-, a $(u u_1; v'' v_2)$-, and a $(u u_1; v'' v_1)$-linkage, then $c(u_1u_2;v_1v_2)\in P(G; R,S)$.
\end{lemma}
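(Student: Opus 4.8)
The plan is to mirror the strategy of Lemmas~\ref{lem:K23d0linkage} and~\ref{lem:misses1ofv1v2linkage}: each hypothesized linkage contributes one elementary tensor to $P(G;R,S)$, and I will exhibit $c(u_1u_2;v_1v_2)$ as an explicit integer combination of these five tensors. Concretely, since $P(G;R,S)$ is a submodule and the orientations of the linkage paths are arbitrary, a $(ab;cd)$-linkage places $c(ab;cd)$, together with all of its sign variants, into $P(G;R,S)$. Writing $T_1=c(u_2u_1;v'v_1)$, $T_2=c(uu_2;v'v_2)$, $T_3=c(uu_1;v'v_1)$, $T_4=c(uu_1;v''v_2)$, and $T_5=c(uu_1;v''v_1)$, the five linkages give $T_1,\dots,T_5\in P(G;R,S)$, so it suffices to find integers $a_1,\dots,a_5$ with $c(u_1u_2;v_1v_2)=\sum_i a_iT_i$.

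To locate the combination I would telescope along both tensor factors. First note that $T_3,T_4,T_5$ all have left factor $u-u_1$, and that $T_4-T_5=(u-u_1)\otimes(v_1-v_2)$, the auxiliary vertex $v''$ cancelling. Next I decompose the left factor of the target via $u_1-u_2=(u-u_2)-(u-u_1)$, so that $c(u_1u_2;v_1v_2)=(u-u_2)\otimes(v_1-v_2)-(u-u_1)\otimes(v_1-v_2)$; the second summand is exactly $-(T_4-T_5)$. For the first summand I would route through $v'$, using $v_1-v_2=(v'-v_2)-(v'-v_1)$ to split it as $(u-u_2)\otimes(v'-v_2)-(u-u_2)\otimes(v'-v_1)=T_2-(T_3-T_1)$, where the last equality uses that $u_1$ cancels in $T_3-T_1=(u-u_2)\otimes(v'-v_1)$.

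Assembling these two reductions yields the identity
\begin{equation*}
c(u_1u_2;v_1v_2)=c(u_2u_1;v'v_1)+c(uu_2;v'v_2)-c(uu_1;v'v_1)-c(uu_1;v''v_2)+c(uu_1;v''v_1),
\end{equation*}
which I would confirm by expanding both sides as $\Z$-linear combinations of the basis tensors $x\otimes y$. Since every term on the right lies in $P(G;R,S)$ and $P(G;R,S)$ is closed under integer linear combinations, the left side lies in $P(G;R,S)$ as well, completing the argument. The only genuine obstacle is discovering the correct signs and coefficients; guided by the two telescoping reductions above, the combination $T_1+T_2-T_3-T_4+T_5$ emerges, and checking it is a routine expansion that I would carry out to finish the proof.
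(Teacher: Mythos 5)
Your proof is correct and is essentially the paper's own argument: your combination $T_1+T_2-T_3-T_4+T_5$ is term-for-term identical to the paper's sum $c(u_1u_2;v_1v')+c(u_2u;v_2v')+c(uu_1;v_1v')+c(uu_1;v_2v'')+c(u_1u;v_1v'')$, since $c(ab;cd)=c(ba;dc)=-c(ab;dc)$ lets one absorb your minus signs into the notation. The only difference is cosmetic sign normalization; your telescoping derivation via $u_1-u_2=(u-u_2)-(u-u_1)$ and routing through $v'$ and $v''$ is a clean way to discover the same identity.
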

\begin{proof}
	Since $G$ contains a $(u_1u_2;v'v_1)$-linkage, $c(u_1u_2;v_1v')\in P(G; R, S)$.
	Since $G$ contains a $(u u_2; v' v_2)$-linkage, $c(u_2u;v_2v')\in P(G; R, S)$.
	Since $G$ contains a $(u u_1; v' v_1)$-linkage, $c(uu_1;v_1v')\in P(G; R, S)$.
	Since $G$ contains a $(u u_1; v'' v_2)$-linkage, $c(uu_1; v_2v'') \in P(G; R, S)$.
	Since $G$ contains a $(u u_1; v'' v_1)$-linkage, $c(u_1u;v_1v'')\in P(G; R, S)$.
	Then $c(u_1u_2;v_1v_2) = c(u_1u_2;v_1v')+ c(u_2u;v_2v') + c(uu_1;v_1v') + c(uu_1; v_2v'') +c(u_1u;v_1v'') \in P(G; R, S)$.
\end{proof}

\begin{lemma}\label{lem:2sideb}
	Let $G=(V,E)$ be a graph and let $R,S\subseteq V$. Let $u_1,u_2\in R\cap S$, let $v_1,v_2\in S$, and let  $u\in R$. If $G$ contains a $(u_1u; v_1u_2)$-, a $(u u_2; u_1v_1)$-, a $(u u_1; u_2 v_2)$-, and a $(u u_2; u_1 v_2)$-linkage, then $c(u_1u_2;v_1v_2)\in P(G; R,S)$.
\end{lemma}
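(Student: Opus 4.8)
The plan is to follow the same template as Lemmas~\ref{lem:K23d0linkage}, \ref{lem:misses1ofv1v2linkage}, and \ref{lem:2sidea}: each hypothesized linkage produces one elementary $(R,S)$-tensor lying in $P(G;R,S)$, and then the target tensor $c(u_1u_2;v_1v_2)$ is exhibited as a $\mathbb{Z}$-linear combination of these. The fact I would use repeatedly is that an $(ab;cd)$-linkage $(P_1,P_2)$ in $G$ contributes $\partial(P_1)\otimes\partial(P_2)$ to $P(G;R,S)$, and that, by choosing the orientations of $P_1$ and $P_2$, this equals $(a-b)\otimes(c-d)=c(ab;cd)$. In each case I must check that the endpoints of the first path lie in $R$ and those of the second lie in $S$; since $u_1,u_2\in R\cap S$, $v_1,v_2\in S$, and $u\in R$, this membership check goes through for all four listed linkages.

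First I would record the four tensors coming from the hypotheses: the $(u_1u;v_1u_2)$-linkage gives $c(u_1u;v_1u_2)\in P(G;R,S)$; the $(uu_2;u_1v_1)$-linkage gives $c(uu_2;u_1v_1)$; the $(uu_1;u_2v_2)$-linkage gives $c(uu_1;u_2v_2)$; and the $(uu_2;u_1v_2)$-linkage gives $c(uu_2;u_1v_2)$. Each of these lies in $P(G;R,S)$, which is a submodule and hence closed under integer linear combinations.

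The substantive step is to find the correct combination. Expanding $c(ab;cd)=(a-b)\otimes(c-d)$ and using bilinearity in each factor together with the sign identities $c(ab;cd)=-c(ba;cd)=-c(ab;dc)$, I would check that
\begin{equation*}
c(u_1u_2;v_1v_2) = c(u_1u;v_1u_2) - c(uu_2;u_1v_1) - c(uu_1;u_2v_2) + c(uu_2;u_1v_2).
\end{equation*}
This is verified by expanding all four summands over the basis of monomials $x\otimes y$ and observing that every monomial involving $u$, namely $u\otimes u_1$, $u\otimes u_2$, $u\otimes v_1$, $u\otimes v_2$, and every ``diagonal'' monomial among $u_1,u_2$, namely $u_1\otimes u_2$ and $u_2\otimes u_1$, cancels, leaving exactly $u_1\otimes v_1 - u_1\otimes v_2 - u_2\otimes v_1 + u_2\otimes v_2 = (u_1-u_2)\otimes(v_1-v_2)$. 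Since the four summands lie in $P(G;R,S)$, so does $c(u_1u_2;v_1v_2)$, which is the claim.

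I expect the only real obstacle to be discovering this particular linear combination and keeping the sign bookkeeping straight: the two negative coefficients mean that the clean ``telescoping with all $+$ signs'' seen in the earlier lemmas is replaced here by a genuine cancellation, so the expansion must be carried out carefully. Everything else, namely identifying the $R$/$S$ membership of each path's endpoints and invoking closure of $P(G;R,S)$ under $\mathbb{Z}$-combinations, is routine.
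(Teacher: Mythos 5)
Your proof is correct and takes essentially the same approach as the paper: each hypothesized linkage yields an elementary tensor in $P(G;R,S)$, and the target is expressed as the integer combination $c(u_1u_2;v_1v_2)=c(u_1u;v_1u_2)-c(uu_2;u_1v_1)-c(uu_1;u_2v_2)+c(uu_2;u_1v_2)$, which indeed expands to $(u_1-u_2)\otimes(v_1-v_2)$. Your sign bookkeeping is in fact more careful than the paper's own: the combination printed in the paper's proof contains sign typos (as written it does not expand to the target tensor), and your identity is the corrected form of the same argument.
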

\begin{proof}
	Since $G$ contains a $(u_1u; v_1u_2)$-linkage, $c(uu_1;vu_2)\in P(G; R, S)$.
	Since $G$ contains a $(u u_2; u_1v_1)$-linkage, $c(uu_2;v_1u_1)\in P(G; R, S)$.
	Since $G$ contains a $(u u_1; u_2 v_2)$-linkage, $c(uu_1;v_2u_2)\in P(G; R, S)$.
	Since $G$ contains a $(u u_2; u_1 v_2)$-linkage, $c(uu_2;v_2u_1)\in P(G; R, S)$.
	Then $c(u_1u_2;v_1v_2) = c(u_1u;vu_2)+ c(uu_2;v_1u_1) -c(uu_1;v_2u_2) + c(uu_2;v_2u_1)\in P(G; R, S)$.
\end{proof}

\section{Linkages in Nearly Planar Graphs}

In this section, we prove the following theorem.

\begin{theorem}\label{thm:3planarboundary}
Let $G$ be a $2$-connected graph and let $R, S\subseteq V(G)$. If $L(R,S)-P(G; R, S)\not=\emptyset$, then either $G$ has a sided $(R,S)$-separation or a side $(S,R)$-separation, or there exists a cyclic ordering $w_1,\ldots,w_{n}$ of the vertices in $R\cup S$ such that $(G, w_1,\ldots,w_{n})$ is nearly planar and $R$ and $S$ cross.
\end{theorem}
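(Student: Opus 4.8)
The plan is to reduce the problem to a single combinatorial obstruction and then feed it into the classical two-disjoint-paths machinery. Since $L(R,S)$ is generated by elementary $(R,S)$-tensors by Lemma~\ref{lem:circuitandkurcycles} and $P(G;R,S)$ is a submodule of $L(R,S)$, the hypothesis $L(R,S)-P(G;R,S)\neq\emptyset$ forces some elementary tensor $z_0\notin P(G;R,S)$: if every generator lay in $P(G;R,S)$, then so would the module it generates. Hence either $z_0=c(u_1u_2;v_1v_2)$ with $u_1,u_2\in R$, $v_1,v_2\in S$ and $u_1,u_2,v_1,v_2$ distinct, or $z_0=c(u_1,u_2,u_3)$ with distinct $u_1,u_2,u_3\in R\cap S$. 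I would carry out the first (generic) case in detail and then indicate how the triple case collapses to the crossing condition $R=S$, $|R|=3$.

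For the generic case, observe that a single $(u_1u_2;v_1v_2)$-linkage $(P_1,P_2)$ already contributes $\partial(P_1)\otimes\partial(P_2)=\pm c(u_1u_2;v_1v_2)\in P(G;R,S)$; thus $z_0\notin P(G;R,S)$ implies that $G$ has \emph{no} $(u_1u_2;v_1v_2)$-linkage. I would now invoke the structure theorem for the two-disjoint-paths problem (Seymour, Shiloach, Thomassen): since $G$ is $2$-connected and admits no such linkage on the four distinct terminals $u_1,u_2,v_1,v_2$, there is a separated family $\mathcal{A}_0$, each member having at most three neighbours, such that $p(G,\mathcal{A}_0)$ embeds in a closed disc with $u_1,v_1,u_2,v_2$ on the boundary in this (alternating, hence crossing) cyclic order and each three-neighbour member bounding a facial triangle. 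This is precisely near-planarity of $(G,\mathcal{A}_0)$ certified for these four terminals, and the interleaving $u_1,v_1,u_2,v_2$ already supplies the crossing pair demanded in the conclusion.

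The heart of the argument, and the step I expect to be the main obstacle, is promoting this four-terminal drawing to one in which \emph{every} vertex of $R\cup S$ lies on the boundary of the disc. I would take a terminal $w\in R\cup S$ that is not yet exposed on the boundary and argue that this forces a contradiction unless a sided separation appears. Using $2$-connectivity, one routes disjoint paths from $w$ and from the exposed terminals so as to realize one of the linkage configurations of Lemmas~\ref{lem:K23d0linkage}, \ref{lem:misses1ofv1v2linkage}, \ref{lem:2sidea}, and \ref{lem:2sideb}; each such configuration forces $c(u_1u_2;v_1v_2)\in P(G;R,S)$, contradicting the choice of $z_0$. The only way this routing can fail is when $w$ is trapped behind a $2$-cut $\{w_1,w_2\}$ whose private side carries at least two $R$-vertices and no $S$-vertex, with these $R$-vertices fully interchangeable relative to $\{w_1,w_2\}$ via the two required linkages — that is, exactly a sided $(R,S)$- (or, symmetrically, $(S,R)$-) separation, which is the alternative conclusion. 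The technical crux is the bookkeeping: re-choosing $\mathcal{A}$ as members are exposed while preserving planarity of $p(G,\mathcal{A})$, and certifying that failure of the routing is a genuine sided separation rather than a reducible three-neighbour blob.

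Finally, the triple case $z_0=c(u_1,u_2,u_3)$ with $u_1,u_2,u_3\in R\cap S$ is handled in parallel: absence of this tensor from $P(G;R,S)$ rules out the disjoint linkages through auxiliary routes that would otherwise realize $c(u_1,u_2,u_3)$, forcing a planar structure in which no further common terminal can be split off from $\{u_1,u_2,u_3\}$; up to a sided separation this yields $R=S=\{u_1,u_2,u_3\}$, i.e. the crossing condition $R=S$, $|R|=3$. In either case, once all of $R\cup S$ is exposed on the boundary, reading the terminals off in boundary order produces the desired cyclic ordering $w_1,\ldots,w_n$; near-planarity of $(G,\mathcal{A},w_1,\ldots,w_n)$ then holds by construction, and the interleaving $u_1,v_1,u_2,v_2$ (respectively the common triple) witnesses that $R$ and $S$ cross, completing the argument.
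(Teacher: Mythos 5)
Your generic case is, in outline, exactly the paper's proof: reduce to a missing elementary tensor via Lemma~\ref{lem:circuitandkurcycles}, observe that $c(u_1u_2;v_1v_2)\notin P(G;R,S)$ kills any $(u_1u_2;v_1v_2)$-linkage, invoke Seymour's two-disjoint-paths theorem to get near-planarity with $u_1,v_1,u_2,v_2$ on the boundary, and then expose the remaining terminals of $R\cup S$ one at a time, with each failure of the routing yielding either $c(u_1u_2;v_1v_2)\in P(G;R,S)$ (contradiction) or a sided separation. That iterative step is the paper's Lemma~\ref{internRandS}, built on Lemmas~\ref{lem:K23d0}, \ref{lem:misses1ofv1v2}, \ref{lem:2side} and \ref{lem:midsep}, and your sketch of it is at an acceptable level of detail for a proposal.

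The genuine gap is your treatment of the triple case. You claim that $c(u_1,u_2,u_3)\notin P(G;R,S)$ with $u_1,u_2,u_3\in R\cap S$ forces, up to sided separations, $R=S=\{u_1,u_2,u_3\}$. This is false. Take $G=C_4$ with vertices $u_1,u_2,u_3,u_4$ in cyclic order and $R=S=V(G)$: the graph is $2$-connected; it has no sided $(R,S)$- or $(S,R)$-separation, because every vertex lies in $S$, so no side of a $2$-separation can avoid $S$ while containing two $R$-vertices; and yet $c(u_1,u_2,u_3)\notin P(G;R,S)$, since every nontrivial linkage of $C_4$ is a pair of opposite subpaths of the cycle, whose tensors all satisfy $\link(\cdot\,;u_1,u_2,u_3,u_4)=0$, while $\link(c(u_1,u_2,u_3);u_1,u_2,u_3,u_4)=2$. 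So the triple case does not collapse to $|R\cup S|=3$; and when $|R\cup S|\ge 4$ the crossing demanded by the conclusion must come from an interleaved quadruple, which your argument never produces. The paper closes this gap with Lemma~\ref{lem:Kuratowskiconnected2}: choose a fourth terminal $v$, use $2$-connectivity (Menger, twice) to obtain a $(vz_2;z_1z_3)$- and a $(vz_1;z_3z_2)$-linkage with $\{z_1,z_2,z_3\}=\{u_1,u_2,u_3\}$, and exploit the identity
\begin{equation*}
c(vz_2; z_1z_3) + c(vz_1; z_3z_2) = c(z_1,z_2,z_3) + c(vz_3; z_1z_2)
\end{equation*}
to conclude that the four-distinct-vertex tensor $c(vz_3;z_1z_2)$ also lies outside $P(G;R,S)$, which reduces the triple case to your generic case. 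Without this reduction (or an equivalent one) your proof does not go through.
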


If $P$ is a path in a graph $G=(V,E)$ connecting $u$ and $v$, we also write that $P$ is a $u\to v$ path in $G$.
If $S\subseteq V$ and $P$ is a path in $G$ connecting $u$ and a vertex in $S$, we also write that $P$ is a $u\to S$ path in $G$.

Let $(G, \mathcal{A})$ be a nearly planar graph. We call an $A\in \mathcal{A}$ \emph{minimal} if there is no collection $\mathcal{H}$ of pairwise disjoint subsets of $A$ such that $\mathcal{H}\not=\{A\}$ and $(G[A\cup N(A)], \mathcal{H}, a_1, \ldots, a_m)$ is nearly planar, where $\{a_1,\ldots,a_m\}=N(A)$.
If every element of $\mathcal{A}$ is minimal, we say that $\mathcal{A}$ is \emph{minimal}.

\begin{lemma}\label{lem:Aminimal}
Let $(G, \mathcal{A}; r_1,\ldots,r_n)$ be a nearly planar graph. Let $A\in \mathcal{A}$ with $|N(A)|=3$ and $A$ minimal, let $\{u_1,u_2,u_3\} = N(A)$, and let $u\in A$. Then $G[A\cup N(A)]$ contains a $(u_1u_2; u_3u)$-, a $(u_1u_3; u_2u)$-, and a $(u_1u; u_2u_3)$-linkage.
\end{lemma}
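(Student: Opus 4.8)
The plan is to establish all three linkages by contradiction, invoking the two-disjoint-paths theorem (Seymour, Shiloach, Thomassen) together with the minimality of $A$. Write $H = G[A \cup N(A)]$. The three assertions are symmetric in $u_1,u_2,u_3$: each pairs $u$ with one of $u_1,u_2,u_3$ and leaves the other two together. So it suffices to treat one of them, say the $(u_1u; u_2u_3)$-linkage; the arguments for $(u_1u_2;u_3u)$ and $(u_1u_3;u_2u)$ are identical after permuting the names of $u_1,u_2,u_3$. Note first that the four vertices $u_1,u,u_2,u_3$ are distinct, since $u\in A$ while $u_1,u_2,u_3\in N(A)$ and $A\cap N(A)=\emptyset$.

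Suppose for contradiction that $H$ has no $(u_1u; u_2u_3)$-linkage, i.e.\ no two disjoint paths, one joining $u_1$ to $u$ and one joining $u_2$ to $u_3$. By the two-disjoint-paths theorem, $H$ then reduces, by repeatedly deleting the interior of a separation of width at most $3$ whose cut separates off a piece containing none of $u_1,u,u_2,u_3$ (and adding the corresponding edge or triangle on the cut), to a graph that can be drawn in a disc with no two edges crossing and with $u_1,u_2,u,u_3$ appearing on the boundary circle in this cyclic (crossing) order. Let $\mathcal{H}$ be the family of vertex sets of the deleted pieces.

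The heart of the matter is to verify that $\mathcal{H}$ is precisely the kind of family demanded by the definition of nearly planar, and that it witnesses non-minimality of $A$. Each member of $\mathcal{H}$ contains none of the four distinguished vertices, hence in particular avoids $u_1,u_2,u_3$ and so is a subset of $A$; the members are pairwise disjoint, and, because each arises as the distinguished-vertex-free side of a cut of width at most $3$, each has at most three neighbours, with that boundary triangle facial in the reduced drawing when there are exactly three, and the family is separated. Thus $p(H,\mathcal{H})$ is exactly the reduced planar graph, drawn in the disc with $u_1,u_2,u_3$ on its boundary, so $(H,\mathcal{H},u_1,u_2,u_3)$ is nearly planar. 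Finally, $u$ is one of the four distinguished vertices and therefore survives as a vertex of the reduced graph, so $u\notin\bigcup\mathcal{H}$; since $u\in A$, this forces $\bigcup\mathcal{H}\neq A$ and hence $\mathcal{H}\neq\{A\}$. This contradicts the minimality of $A$, which completes the argument for this linkage and, by symmetry, for all three.

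I expect the main obstacle to be the third paragraph: confirming that the output of the two-disjoint-paths reduction is genuinely a \emph{separated} family of subsets of $A$ with facial boundary triangles, matching the paper's definition of nearly planar on the nose. In particular one must organize the successive width-$3$ reductions so that the resulting pieces are pairwise separated and their boundary triangles become disjoint facial triangles in a single planar drawing, and one must fold the degenerate cases—disconnected $H$, parallel edges, or width-$1$ and width-$2$ separations—into the same reduction so that they, too, yield a finer admissible $\mathcal{H}$. Once the reduction is phrased in the language of the operation $p(\cdot,\cdot)$, the contradiction with minimality is immediate.
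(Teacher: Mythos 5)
The paper states Lemma~\ref{lem:Aminimal} without proof, so there is no in-paper argument to compare against; judged on its own terms, your strategy is correct and is surely the intended one. The skeleton is exactly right: by the symmetry of the three pairings of $\{u_1,u_2,u_3,u\}$ it suffices to treat one linkage; if $H=G[A\cup N(A)]$ has no $(u_1u;u_2u_3)$-linkage, the two-disjoint-paths theorem yields a family $\mathcal{H}$ witnessing near planarity of $H$ with $u_1,u_2,u,u_3$ on the disc boundary; every member of $\mathcal{H}$ avoids the four terminals, hence avoids $N(A)=\{u_1,u_2,u_3\}$ and so lies inside $A$; since $u\in A$ but $u\notin\bigcup\mathcal{H}$, necessarily $\mathcal{H}\neq\{A\}$; and dropping $u$ from the boundary list shows $(H,\mathcal{H},u_1,u_2,u_3)$ is nearly planar, contradicting minimality of $A$.

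The one substantive remark concerns the obstacle you flag at the end: organizing the raw ``delete small cuts and add clique edges'' reduction into a separated family with disjoint facial triangles, handling width-$1$ and width-$2$ separations, disconnected $H$, and so on. That work is self-imposed. The paper's Theorem~\ref{thm:3planar} (quoted from Seymour) is already stated in exactly the packaged form you need: $H$ contains no $(s_1t_1;s_2t_2)$-linkage if and only if $(H;s_1,s_2,t_1,t_2)$ is nearly planar, where ``nearly planar'' is the paper's own definition. Applying it with $(s_1,s_2,t_1,t_2)=(u_1,u_2,u,u_3)$ hands you, directly from that definition, a separated family $\mathcal{H}$ whose members each have at most three neighbours, with facial triangles in a disc drawing of $p(H,\mathcal{H})$, with $u_1,u_2,u,u_3$ on the boundary in this order, and with no member containing a terminal. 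Your third paragraph then requires no further justification and your argument is complete as written; there is also no circularity, since Theorem~\ref{thm:3planar} is an external result not derived from this lemma.
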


\begin{lemma}\label{lem:linkageliftH}
Let $(G, \mathcal{A})$ be a nearly planar graph, where $\mathcal{A}$ is minimal. Let $H = p(G, \mathcal{A})$ and let $u_1, u_2, v_1, v_2$ be vertices of $H$. If $H$ contains a $(u_1u_2; v_1v_2)$-linkage, then $G$ contains a $(u_1u_2; v_1v_2)$-linkage.
\end{lemma}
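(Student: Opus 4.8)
The plan is to start with a $(u_1u_2;v_1v_2)$-linkage $(P_1,P_2)$ in $H$ and lift it to $G$ by rerouting, for each $A_i\in\mathcal{A}$, the edges of $P_1\cup P_2$ that were introduced when forming $H=p(G,\mathcal{A})$ through the set $A_i$. Recall that the only edges of $H$ that are not edges of $G$ are the ``new'' edges joining two vertices of a common $N(A_i)$, and that $|N(A_i)|\le 3$. I would first record two easy reductions. Since $V(H)=V(G)-\bigcup_i A_i$, the endpoints $u_1,u_2,v_1,v_2$ are vertices of $G$ and are never rerouted, while every vertex to be rerouted lies in some $A_i$ and is therefore internal to its path. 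Second, for a fixed $A_i$ at most one of $P_1,P_2$ can use a new edge inside $N(A_i)$: such an edge occupies two of the at most three vertices of $N(A_i)$, and since $P_1,P_2$ are vertex-disjoint the at most one remaining vertex of $N(A_i)$ cannot support a new edge of the other path.

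Next I would analyze, for a fixed $A_i$, how the unique relevant path $P\in\{P_1,P_2\}$ meets $N(A_i)$. Because any two of the at most three possible new edges in $N(A_i)$ share an endpoint, the new edges of $P$ inside $N(A_i)$ form either a single edge $xy$ or a two-edge subpath $x-y-z$ of $P$. When $|N(A_i)|=3$ I would use that Lemma~\ref{lem:Aminimal} furnishes, for any two vertices of the triangle $N(A_i)$, a path connecting them that avoids the third vertex and hence has interior in $A_i$ (it is the component connecting those two vertices in one of the three listed linkages, being disjoint from a path through the third vertex). In the single-edge case I replace $xy$ by such an $x\to y$ path avoiding the third vertex $z$; in the two-edge case I replace the entire subpath $x-y-z$ by a single $x\to z$ path avoiding $y$, discarding the internal vertex $y$, which is harmless since $y$ is not an endpoint of the linkage. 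It is essential to treat the two-edge case by one combined replacement rather than two separate ones, so that the new interior is a single path inside $A_i$. For $|N(A_i)|=2$ there is no third vertex to avoid, and I would use that minimality of $A_i$ yields an $x\to y$ path with interior in $A_i$ (the easy analogue of Lemma~\ref{lem:Aminimal}); for $|N(A_i)|\le 1$ there are no new edges and nothing to do.

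Finally I would assemble the rerouted segments into paths $P_1',P_2'$ of $G$ and verify they form the desired linkage. The interiors of the replacement paths for distinct $A_i$ lie in the pairwise disjoint sets $A_i$ and each is disjoint from $V(H)$, so they are pairwise disjoint and disjoint from the unchanged portions of $P_1,P_2$, which lie in $V(H)$. The only vertices a replacement shares with the rest of the configuration are its endpoints in $N(A_i)\subseteq V(H)$; since $\mathcal{A}$ is separated, these are reused consistently: endpoints belonging to $P_1$ stay on $P_1'$ and those belonging to $P_2$ stay on $P_2'$ (the two vertex sets being disjoint), while two replacements on the same path meet only at the vertex of $N(A_i)$ at which $P$ passed between them. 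Hence $P_1'$ and $P_2'$ are vertex-disjoint paths of $G$ connecting $u_1,u_2$ and $v_1,v_2$, i.e. a $(u_1u_2;v_1v_2)$-linkage in $G$.

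I expect the main obstacle to be the disjointness bookkeeping in the last step: in particular, ensuring that rerouting a path across an entire triangle of $N(A_i)$ uses a single interior-disjoint path (handled by the combined ``avoid the middle vertex'' replacement), and that boundary vertices shared by different $N(A_i)$ cause no collisions (handled by separatedness of $\mathcal{A}$ together with the fact that a path vertex has at most two incident edges). The only point outside Lemma~\ref{lem:Aminimal} that needs a separate, if minor, argument is the existence of the lift when $|N(A_i)|=2$.
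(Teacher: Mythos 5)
Your proof is correct and takes essentially the same approach as the paper: the paper's own proof also lifts the linkage $(P,Q)$ from $H$ to $G$ by replacing each new edge whose ends lie in some $N(A)$ by a path through $G[A\cup N(A)]$, relying on minimality of $\mathcal{A}$. If anything, your write-up is more careful than the paper's brief version, which replaces new edges one at a time and does not explicitly address two consecutive new edges in the same $N(A)$ or the need for replacement paths to avoid the third vertex of $N(A)$ --- both points that your combined $x\to z$ replacement and your use of Lemma~\ref{lem:Aminimal} (plus the easy $|N(A)|=2$ analogue) handle correctly.
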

\begin{proof}
Let $(P, Q)$ be a $(u_1u_2; v_1v_2)$-linkage in $H$. Replace each edge $e$ of $P$ and $Q$ whose ends $u_e$ and $v_e$ belong to $N(A)$ for some $A\in \mathcal{A}$ by a $u_e\to v_e$ path $T$ in $G[A\cup N(A)]$. Let $P'$ and $Q'$ be the resulting paths. Then $(P', Q')$ is a $(u_1u_2; v_1v_2)$-linkage in $G$.
\end{proof}

\begin{lemma}\label{lem:linkageliftG}
Let $(G, \mathcal{A})$ be a nearly planar graph, where $\mathcal{A}$ is minimal. Let $H = p(G, \mathcal{A})$.
Let $u_1,u_2,v_1,v_2$ be vertices of $G$, with $u_1,u_2,v_1\in V(H)$ and $v_2\not\in V(H)$. Let $P$ be a $u_1\to u_2$ path in $H$, and let $A\in \mathcal{A}$ with $v_2\in A$. Let $r\in N(A)-V(P)$ and suppose $Q$ is a $v_1\to r$ path disjoint from $P$. Then $G$ has a $(u_1u_2; v_1v_2)$-linkage.
\end{lemma}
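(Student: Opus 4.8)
The plan is to reroute the end of $Q$ into the region $A$ so that it reaches $v_2$, and then to expand every set of $\mathcal{A}$ back into $G$, routing each of the two paths through the expanded sets without collisions. Since $(G,\mathcal{A})$ is nearly planar we have $|N(A)|\le 3$, and this bound is what makes the local routing inside $A$ manageable. Throughout I treat $P$ and $Q$ as paths in $H$, as in the hypothesis on $P$.

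First I would isolate a clean entry point into $A$. Let $w$ be the first vertex of $N(A)$ encountered along $Q$ (this exists, since the endpoint $r$ of $Q$ lies in $N(A)$), and let $Q_0$ be the initial segment of $Q$ from $v_1$ to $w$. Because $Q$ is disjoint from $P$ we have $w\notin V(P)$, and by the choice of $w$ the path $Q_0$ meets $N(A)$ only in its endpoint $w$. Next I would produce, inside $G[A\cup N(A)]$, a path $T$ from $w$ to $v_2$ avoiding $N(A)-\{w\}$, together with a spare path across $A$ for later use, splitting on $|N(A)|$. If $|N(A)|=1$ then $N(A)=\{w\}$ and such a $T$ exists because $G[A\cup\{w\}]$ is connected. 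If $|N(A)|=2$, say $N(A)=\{w,s\}$, then (using that $G$ is $2$-connected, so $s$ is not a cutvertex) $G-s$ is connected, and the initial part of a $v_2\to w$ path in $G-s$, taken up to its first vertex outside $A$, is a $w\to v_2$ path lying in $G[A\cup\{w\}]$ and hence avoiding $s$. If $|N(A)|=3$, write $N(A)=\{w,c,d\}$; since $\mathcal{A}$ is minimal, $A$ is minimal, so Lemma~\ref{lem:Aminimal} applied with $u=v_2$ yields a $(cd;wv_2)$-linkage $(T_1,T_2)$ in $G[A\cup N(A)]$, and I set $T:=T_2$, which avoids $c$ and $d$, keeping the disjoint $c\to d$ path $T_1$ in reserve.

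Then I would build the linkage in $G$. Beginning from the disjoint paths $P$ and $Q_0$ in $H$, I expand each $A_i\in\mathcal{A}$ by replacing every clique edge of $N(A_i)$ used by $P$ or $Q_0$ with a routing path through $G[A_i\cup N(A_i)]$, exactly as in the proof of Lemma~\ref{lem:linkageliftH}; minimality of each $A_i$ (via Lemma~\ref{lem:Aminimal}, or $2$-connectivity when $|N(A_i)|\le 2$) lets these finitely many routings be chosen pairwise disjoint. At the distinguished set $A$ the path $Q_0$ uses no clique edge of $N(A)$, and $P$ uses none incident with $w$ since $w\notin V(P)$; so when $|N(A)|\le 2$ neither path uses a clique edge of $N(A)$, and when $|N(A)|=3$ the only clique edge of $N(A)$ that might be expanded is $cd$, used by at most one of the two paths, which I route through the reserved path $T_1$. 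Finally I concatenate $T$ onto the expanded $Q_0$ to reach $v_2$. Calling the two resulting paths $P'$ and $Q'$, the pair $(P',Q')$ joins $u_1,u_2$ and $v_1,v_2$ respectively and is the desired $(u_1u_2;v_1v_2)$-linkage in $G$.

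The crux is the disjointness inside $A$, where the expansion of $P$ (at most a single crossing along $cd$) and the extension of $Q$ to $v_2$ must coexist. This is exactly the content of the $(cd;wv_2)$-linkage furnished by Lemma~\ref{lem:Aminimal}: since $P$ and $Q$ are disjoint, at most one of them uses $cd$, so the total demand inside $A$ is one $c\to d$ path together with a disjoint $w\to v_2$ path, and no further conflict can arise because $T$ avoids $N(A)-\{w\}$ while $w\notin V(P)$. Away from $A$ the disjointness bookkeeping is identical to that in Lemma~\ref{lem:linkageliftH}, so I expect no additional difficulty there.
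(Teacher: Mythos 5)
Your construction follows essentially the same route as the paper's proof: keep $Q$ up to its meeting point with $N(A)$, use Lemma~\ref{lem:Aminimal} (minimality of $A$) to obtain a linkage inside $G[A\cup N(A)]$ whose two legs simultaneously reroute the possible crossing of $P$ along the clique edge of $N(A)$ and extend $Q$ to $v_2$, and expand all remaining clique edges as in Lemma~\ref{lem:linkageliftH}. In fact the paper only writes out the case in which $P$ uses a clique edge $e=u_ev_e$ with both ends in $N(A)$ (which forces $|N(A)|=3$, with $r$ the third vertex); it takes the linkage $(T_1,T_2)$ of Lemma~\ref{lem:Aminimal} pairing a $u_e\to v_e$ path with an $r\to v_2$ path and outputs $(P',Q'\cup T_2)$. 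Your truncation of $Q$ at $w$, and the observation that $Q_0$ then uses no clique edge of $N(A)$ while $P$ can use only $cd$, are sound refinements of the same idea.

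There is, however, one incorrectly justified step: in the case $|N(A)|=2$ you appeal to ``$G$ is $2$-connected, so $s$ is not a cutvertex.'' Two-connectivity of $G$ is not a hypothesis of this lemma (it is assumed in Lemmas~\ref{lem:K23d0} and~\ref{lem:misses1ofv1v2}, where the present lemma gets applied, but not here), and the same issue underlies your bare assertion that $G[A\cup\{w\}]$ is connected when $|N(A)|=1$. The repair uses the hypothesis you already exploit when $|N(A)|=3$, namely minimality of $A$: if some component $K$ of $G[A]$ were not adjacent to every vertex of $N(A)$, then $N(K)\subsetneq N(A)$ forces $K\neq A$, and $\mathcal{H}=\{K,\,A\setminus K\}$ is a family of disjoint subsets of $A$ with $\mathcal{H}\neq\{A\}$ for which $p(G[A\cup N(A)],\mathcal{H})$ is a graph on the at most two vertices of $N(A)$, trivially drawable in a closed disc with $N(A)$ on its boundary (no facial-triangle condition arises since $|N(A)|\le 2$); this contradicts the minimality of $A$. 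Hence every component of $G[A]$, in particular the one containing $v_2$, is adjacent to $w$ (and also to $s$ when $|N(A)|=2$), which yields the $w\to v_2$ path avoiding $N(A)-\{w\}$ that you need. With that substitution, your proof is correct.
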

\begin{proof}
Suppose $e$ is an edge of $P$ whose ends $u_e$ and $v_e$ belongs to $N(A)$.  Since $\mathcal{A}$ is minimal, $A$ is minimal, and so $G[A\cup N(A)]$ contains a $(u_ev_e; ur)$-linkage $(T_1,T_2)$. Replace the edge $e$ in $P$ by $T_1$. Replace any other edge $f$ of $P$ whose ends $s_f$ and $t_f$ belong to $N(A')$ for some $A'\in \mathcal{A}$ with $A'\not=A$ by a $s_f\to t_f$ path $T$ in $G[A'\cup N(A')]$. Let $P'$ be the resulting path. Replace each edge $f$ of $Q$  whose ends $s_f$ and $t_f$ belong to $N(A)$ for some $A\in \mathcal{A}$ by a $s_f\to t_f$ path $T$ in $G[A\cup N(A)]$. Let $Q'$ be the resulting path. Then $(P', Q'\cup T_2)$ is a $(u_1u_2; v_1v_2)$-linkage in $G$.
\end{proof}

Let $C$ be an oriented cycle of a graph $G$. If $u,v\in V(C)$, we denote by $C[u,v]$ the path in $C$ when traversing $C$ from $u$ to $v$ in forward direction. By $C(u,v)$ we denote $C[u,v]-\{u,v\}$. 

\begin{lemma}\label{lem:K23d0}
Let $(G, \mathcal{A})$ be a nearly planar graph, where $G$ is $2$-connected and $\mathcal{A}$ is minimal, and let $R, S\subseteq V(G)$. Let $H = p(G, \mathcal{A})$ and let $C$ be an oriented cycle of $H$. Suppose that $u_1,u_2,v_1, v_2$ are distinct vertices with $u_1,u_2\in R$  and $v_1,v_2\in S$ such  that $u_1,v_1,u_2,v_2$ occurs in this order on $C$.  Let $B$ be a component of $G-V(C)$ that has a neighbor in $C(v_1,v_2)$ and a neighbor in $C(v_2,v_1)$. If $B$ contains a vertex of $R$, then $c(u_1u_2;v_1v_2)\in P(G; R, S)$.
\end{lemma}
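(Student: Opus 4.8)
The plan is to reduce the statement to Lemma~\ref{lem:K23d0linkage}: since $u_1,u_2\in R$ and $v_1,v_2\in S$, it suffices to produce, for a single vertex $u\in R\cap B$, both a $(u_1u;v_1v_2)$-linkage and a $(uu_2;v_2v_1)$-linkage in $G$. Fix $u\in R\cap B$, which exists by hypothesis, and fix vertices $a\in N_G(B)\cap C(v_1,v_2)$ and $b\in N_G(B)\cap C(v_2,v_1)$; these exist because $B$ has a neighbor on each of the two arcs. Since $u_1\in C(v_2,v_1)$ and $u_2\in C(v_1,v_2)$, the four distinguished vertices split $C$ into the arcs $C[v_1,v_2]$ (through $u_2$) and $C[v_2,v_1]$ (through $u_1$), and each of $a,b$ lies in the interior of one of these arcs.

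Next I would build the two linkages combinatorially, routing one path of each through $B$. For the $(u_1u;v_1v_2)$-linkage, take the $v_1\to v_2$ path to be the arc $C[v_1,v_2]$, and take the $u_1\to u$ path to be the subpath of $C$ from $u_1$ to $b$ lying inside the open arc $C(v_2,v_1)$ followed by a $b\to u$ path whose interior lies in $B$. Symmetrically, for the $(uu_2;v_2v_1)$-linkage, take the $v_2\to v_1$ path to be the arc $C[v_2,v_1]$, and take the $u\to u_2$ path to be a $u\to a$ path with interior in $B$ followed by the subpath of $C$ from $a$ to $u_2$ inside $C(v_1,v_2)$. In each linkage the two paths are disjoint: the two $C$-arcs meet only in $\{v_1,v_2\}$, the relevant $C$-segment stays strictly inside one open arc and hence avoids $\{v_1,v_2\}$, and the portion inside $B$ is disjoint from $V(C)$ except at its attachment, which lies on the opposite arc. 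It is precisely the order $u_1,v_1,u_2,v_2$ on $C$ that makes these arcs nest correctly, so that the $u$-path and the $v$-path of each linkage land on opposite arcs.

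Finally I would realize these linkages in $G$. The $B$-portions already live in $G$, while the $C$-arcs and $C$-segments are paths in $H$ and must be lifted. When $u\in V(H)$, both linkages can be taken entirely inside $H$ (routing the $B$-portion through the image of $B$, which is connected in $H$ and avoids $V(C)$ except at its attachment), and Lemma~\ref{lem:linkageliftH} lifts them to $G$; since $\mathcal{A}$ is separated and $|N(A)|\le 3$, no apex set can be used by both paths of a linkage, so the lift stays disjoint. When $u$ lies inside an apex set $A\subseteq B$, I would instead invoke Lemma~\ref{lem:linkageliftG}, taking the relevant $C$-arc as the path $P$ and the $u_1\to b$ (respectively $a\to u_2$) portion, extended inside the image of $B$ to a vertex $r\in N(A)$, as the path $Q$. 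The main obstacle is exactly this bookkeeping: one must ensure that the apex replacements needed to turn the virtual edges of $C$ into genuine $G$-paths (supplied by minimality of $\mathcal{A}$, via Lemma~\ref{lem:Aminimal}) do not collide with the routing through $B$, and conversely. The case distinction on whether $u\in V(H)$ is what isolates the single apex set that carries $u$ and thereby resolves this interference.
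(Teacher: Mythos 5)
Your proposal is correct and takes essentially the same route as the paper: both reduce to Lemma~\ref{lem:K23d0linkage} by building a $(u_1u;v_1v_2)$- and a $(uu_2;v_2v_1)$-linkage, using one arc of $C$ as the $v$-path and routing the $u$-path along the opposite arc and through $B$, and both then split into the cases $u\in V(H)$ (lifted via Lemma~\ref{lem:linkageliftH}) and $u$ inside an apex set (handled via Lemma~\ref{lem:linkageliftG}). The only difference is that you make the routing and disjointness bookkeeping more explicit than the paper's terser write-up.
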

\begin{proof}
Let $u\in R$ belong to $B$. 
Suppose first that $u\not\in A$ for all $A\in \mathcal{A}$. Let $P'_1$ be a $u_1\to u$ path in $H-V(C[v_1,v_2])$, and let $P'_2$ be a $u\to u_2$ path in $H - V(C[v_2,v_1])$. Then $(P'_1, C[v_1,v_2])$ and $(P'_2, C[v_2,v_1])$ are $(u_1u, v_1v_2)$- and $(uu_2;v_2v_1)$-linkages in $H$, respectively. By Lemma~\ref{lem:linkageliftH}, $G$ contains a $(u_1u, v_1v_2)$- and a $(uu_2;v_2v_1)$-linkage. By Lemma~\ref{lem:K23d0linkage}, $c(u_1u_2;v_1v_2)\in P(G; R, S)$.

Suppose next that $u\in A$ for some $A\in \mathcal{A}$.  Since $B$ has a neighbor in $C[v_2,v_1]$ and a neighbor in $C[v_1,v_2]$, $N(A)-V(C[v_2,v_1])\not=\emptyset$ and $N(A)-V(C[v_1,v_2])\not=\emptyset$. Let $r\in N(A)-V(C[v_2,v_1])$. Since $B$ has a neighbor in $C[v_2,v_1]$ and a neighbor in $C[v_1,v_2]$, there exists an $r\to u_2$ disjoint from $C[v_2,v_1]$. Hence $H$ contains a $(v_1v_2,ru_2)$-linkage. By Lemma~\ref{lem:linkageliftG}, $G$ contains a $(v_1v_2, uu_2)$-linkage in $G$. In the same way, $G$ contains a $(v_1v_2,uu_1)$-linkage. By Lemma~\ref{lem:K23d0linkage}, $c(u_1u_2;v_1v_2)\in P(G; R, S)$. 
\end{proof}

\begin{lemma}\label{lem:misses1ofv1v2}
Let $(G, \mathcal{A}, t_1,\ldots,t_n)$ be a nearly planar graph, where $G$ is $2$-connected and $\mathcal{A}$ is minimal, and let $R, S\subseteq V(G)$ such that $\{t_1,\ldots,t_n\}\subseteq R\cup S$. Suppose $u_1,u_2,v_1,v_2$ are distinct vertices with $u_1,u_2\in R$ and $v_1,v_2\in S$ such that $u_1,v_1,u_2,v_2$ occur in this order in the cyclic ordering $t_1,\ldots,t_n$. Let $(H, t_1,\ldots,t_n) = p(G, \mathcal{A}, t_1,\ldots,t_n)$ and let $C$ be the cycle bounding the infinite face of a plane drawing of $H$ with the vertices $t_1,\ldots,t_n$ on $C$. Let $B$ be a component of $G-V(C)$ such that $N(B)\subseteq C[v_2,v_1]$, with at least one neighbor of $B$ in $C(v_2,v_1)$. Then at least one of the following holds:
\begin{enumerate}
\item $c(u_1u_2;v_1v_2)\in P(G; R, S)$, 
\item $G$ has a sided $(R,S)$-separation or a sided $(S,R)$-separation,
\item  there exists a cyclic ordering $w_1,\ldots,w_{n+1}$ with $\{t_1,\ldots,t_n\}\subset \{w_1,\ldots,w_{n+1}\}\subseteq R\cup S$ such that $t_1,\ldots,t_n$ occur in this ordering in $w_1,\ldots,w_{n+1}$ and $(G, w_1,\ldots,w_{n+1})$ is nearly planar, or
\item no vertex of $B$ belongs to $R\cup S$.
\end{enumerate}
\end{lemma}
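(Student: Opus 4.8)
The plan is to assume that outcome (4) fails and derive one of (1)--(3). So fix a vertex $u\in B$ with $u\in R\cup S$. First I would record the geometry dictated by the hypotheses: since $u_1,v_1,u_2,v_2$ occur in this cyclic order, the arc $C[v_2,v_1]$ carries $u_1$ in its interior while the complementary arc $C[v_1,v_2]$ carries $u_2$, and the hypothesis $N(B)\subseteq C[v_2,v_1]$ with a neighbor in the open arc $C(v_2,v_1)$ means that $B$ is attached entirely to the arc containing $u_1$. With this in hand the ``easy half'' of the desired linkage is immediate: routing $v_1\to v_2$ along $C[v_1,v_2]$ and $u_1\to u$ through $C[v_2,v_1]$ into $B$ produces a $(u_1u;v_1v_2)$-linkage in $H$ (when $u\notin\bigcup\mathcal{A}$) or in $G$ directly (when $u\in A$ for some $A\in\mathcal{A}$), the latter via Lemma~\ref{lem:linkageliftG} and the former lifted by Lemma~\ref{lem:linkageliftH}.

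The crux is that the straddling argument of Lemma~\ref{lem:K23d0} now breaks: because every neighbor of $B$ lies on $C[v_2,v_1]$, there is no $u\to u_2$ path avoiding that arc, so the clean second linkage $(uu_2;v_2v_1)$ need not exist. My approach in the principal case $u\in R$ is therefore to aim instead for the three linkages $(u_1u;v_1v_2)$, $(uu_2;vv_1)$ and $(u_2u;v_2v)$ demanded by Lemma~\ref{lem:misses1ofv1v2linkage} for a suitable helper $v\in S$, or for the four- and five-linkage packages of Lemmas~\ref{lem:2sideb} and \ref{lem:2sidea} when the available terminals sit in $R\cap S$. Here $v$ would be chosen as a terminal of $S$ encountered along $C[v_2,v_1]$ near the attachment of $B$, and the path $u\to u_2$ would be forced around the $v_1$-end of the arc using the interior neighbor of $B$, with the auxiliary $S$-paths $v\to v_1$ and $v_2\to v$ kept disjoint by hugging $C$; whenever these disjoint paths can be realized (lifting through $\mathcal{A}$-sets as before), outcome (1) follows.

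When the route to $u_2$ cannot be realized, I would extract the obstruction as a $2$-separation $(G_1,G_2)$ whose small side $G_2$ contains $B$ and whose two cut vertices $w_1,w_2$ both lie on $C[v_2,v_1]$, separating $B$ from $u_2$. This separation is then tested against the definition of sidedness: if it is a sided $(R,S)$- or $(S,R)$-separation we are in outcome (2). Otherwise the failure of the linkage conditions defining sidedness is exactly what licenses a re-embedding---one pulls a terminal $u\in B\cap(R\cup S)$ out to the outer cycle between $w_1$ and $w_2$, inserting it into the cyclic order to obtain $w_1,\ldots,w_{n+1}$; since the inserted vertex lands on the arc already carrying $u_1$, the four vertices $u_1,v_1,u_2,v_2$ keep their cyclic order and $R$ and $S$ still cross, giving a nearly planar $(G,w_1,\ldots,w_{n+1})$ as in outcome (3). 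The symmetric possibility $u\in S$ is handled by interchanging the roles of the two arcs.

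I expect the main obstacle to be making this separation-versus-re-embedding dichotomy precise: proving that the impossibility of the second linkage really yields a $2$-separation with both cut vertices on $C[v_2,v_1]$, and then showing that non-sidedness of that separation is exactly the condition under which a vertex of $B$ can be moved onto the boundary without destroying planarity or the facial-triangle condition on the $\mathcal{A}$-sets with $|N(A)|=3$. The uniform handling of terminals lying inside $\mathcal{A}$-sets, via Lemmas~\ref{lem:linkageliftH} and \ref{lem:linkageliftG} together with the minimality supplied by Lemma~\ref{lem:Aminimal}, is a secondary bookkeeping point that must be threaded through every case.
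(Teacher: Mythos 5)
Your inventory of tools is the right one (the lift Lemmas~\ref{lem:linkageliftH} and \ref{lem:linkageliftG}, minimality via Lemma~\ref{lem:Aminimal}, the three-linkage Lemma~\ref{lem:misses1ofv1v2linkage}, a sided separation for outcome (2), insertion of $u$ into the cyclic ordering for outcome (3)), but the logical engine you propose --- ``try the linkages; if they cannot be realized, extract a $2$-separation; if that separation is not sided, re-embed'' --- is not how the proof goes, and its last step is false. The paper first re-chooses $B$ so that its attachment arc $C[r_2,r_1]$ (with $r_i$ the extreme neighbors toward $v_i$) is minimal over all qualifying components, and then runs a trichotomy on the terminals lying strictly inside that arc: if $C(r_2,r_1)$ contains no vertex of $R\cup S$, insert $u$ into the resulting gap (outcome (3)); if $C(r_2,r_1)$ contains a terminal of type opposite to a terminal of $B$ (say $u\in R$ in $B$ and $v\in S$ on the arc), the three linkages of Lemma~\ref{lem:misses1ofv1v2linkage} are constructed \emph{unconditionally}, with sub-cases for $u\in A$ handled by Lemmas~\ref{lem:Aminimal} and \ref{lem:linkageliftG} (outcome (1)); if $B$ and $C(r_2,r_1)$ carry terminals of one type only, the cut $\{r_1,r_2\}$ gives a sided separation (outcome (2)). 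Nothing is left contingent, and sidedness of a separation is never the case driver. (Lemmas~\ref{lem:2sidea} and \ref{lem:2sideb}, which you invoke, are not used here; they belong to the case $N(B)=\{v_1,v_2\}$ treated in Lemma~\ref{lem:midsep}.)

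The concrete gap is your fallback ``non-sided $\Rightarrow$ re-embeddable.'' Take $G$ to be the cycle $C$ plus a single vertex $u\in R$ adjacent to $r_1,r_2\in C(v_2,v_1)$, with one terminal $v\in S$ strictly between $r_2$ and $r_1$ on $C$, so the terminals are $u_1,v_1,u_2,v_2,v,u$ and $G$ is a subdivided theta graph. The separation with cut $\{r_1,r_2\}$ is not sided (its small side contains both $u\in R$ and $v\in S$), and no sided separation exists elsewhere; yet no insertion of $u$ into the cyclic ordering is nearly planar, because all three branches of the theta carry terminals internally, so no face of $G$, nor of any $p(G,\mathcal{A}')$ with $\mathcal{A}'$ avoiding the terminals, has all six on its boundary. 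Here (2), (3), (4) all fail and only (1) holds --- and it holds precisely because the linkages of Lemma~\ref{lem:misses1ofv1v2linkage} do exist, which is the step your sketch leaves conditional (``whenever these disjoint paths can be realized''). So in exactly the regime where your linkage construction might fall short, your fallback would output (3) falsely. To repair the argument you must make the minimal re-choice of $B$, key the case split to the terminal content of $C(r_2,r_1)$ rather than to sidedness of an extracted separation, and prove the required linkages exist outright in the opposite-type case.
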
 
\begin{proof}
For each component $K$ of $G-V(C)$ such that $N(K)\subseteq C[v_2,v_1]$ and at least one neighbor belongs to $C(v_2,v_1)$, let $r_i(K)$, $i=1,2$, be the neighbor of $K$ in $C[v_2,v_1]$ that is closer to $v_i$. Choose a component $B$ of $G-V(C)$ such that the length $C[r_2(B), r_1(B)]$ is minimum. For $i=1,2$, let $r_i = r_i(B)$. We may assume that a vertex $u\in R\cup S$ belongs to $B$. Let $\mathcal{A}'$ be the the family of all sets $A\in\mathcal{A}$ such that $A\subseteq V(B)$ and let $B' = p(B, \mathcal{A}')$.

Suppose first that $C(r_2,r_1)$ does not contain a vertex of $R\cup S$. For $i=1,2$, let $z_i$ be the vertex of $\{t_1,\ldots,t_n\}$ in $C[r_1,r_2]$ closer to $r_i$. Then $C(z_1,z_2)$ has no vertices of $\{t_1,\ldots,t_n\}$. Let the cyclic ordering $w_1,\ldots,w_{n+1}$ be obtained from $t_1,\ldots,t_n$ by adding $u$ between $z_1$ and $z_2$. Then $(G, w_1,\ldots,w_{n+1})$ is nearly planar, $\{t_1,\ldots,t_n\}\subset\{w_1,\ldots,w_{n+1}\}\subseteq R\cup S$, and $t_1,\ldots,t_n$ occur in this ordering in $w_1,\ldots,w_{n+1}$ .

We may therefore assume that $C(r_2,r_1)$ contains at least one vertex of $R\cup S$.

Suppose that $u\in R$ and $C(r_2,r_1)\cap S\not=\emptyset$, or that $u\in S$ and $C(r_2,r_1)\cap R\not=\emptyset$. By symmetry, we may assume that the former holds; let $v$ be a vertex in $C(r_2,r_1)\cap S$.

Suppose first that $u\not\in A$ for every $A\in \mathcal{A}$.  Let $P_0$ be a $u\to u_1$ path in $H-V(C[v_1,v_2])$ and let $Q_0 = C[v_1,v_2]$. Then $(P_0, Q_0)$ is a $(uu_1; v_1v_2)$-linkage in $H$.  By Lemma~\ref{lem:linkageliftH}, $G$ contains a $(uu_1; v_1v_2)$-linkage. Let $P$ be an $r_2\to u$ path in $B'$ that is internally disjoint from $C[r_2,r_1]$ and let $P_1$ be the concatenation of $P$ and $C[u_2,r_2]$. Let $Q_1=C[v_1,v]$. Then $(P_1,Q_1)$ is a $(u u_2;v_1 v)$-linkage in $H$. By Lemma~\ref{lem:linkageliftH}, $G$ contains a $(uu_2; v_1v)$-linkage. In the same way, we define the $(u u_2; v_2 v)$-linkage $(P_2, Q_2)$. By Lemma~\ref{lem:misses1ofv1v2linkage}, $c(u_1u_2;v_1v_2)\in P(G; R, S)$.

We may therefore assume that $u\in A$ for some $A\in \mathcal{A}$.
Suppose that $N(A)-V(C)=\emptyset$. Then $N(A)\subseteq V(C[v_2,v_1])$ and $r_1,r_2\in N(A)$. Since $v\in C(r_1,r_1)$, $|N(A)|=3$. Let $r \in N(A)-\{r_1,r_2\}$ and let $P$ be a $r\to u_1$ path in $C(v_2,v_1)$. Then $(P, C[v_1,v_2])$ is a $(ru_1; v_1v_2)$-linkage in $H$. By Lemma~\ref{lem:linkageliftG}, there exists a $(uu_1; v_1v_2)$-linkage in $G$. Since $(C[r_1,u_2], C[v_2,v])$ is a $(r_1u_2; vv_2)$-linkage in $H$, $G$ contains a $(uu_2; v_2v)$-linkage, by Lemma~\ref{lem:linkageliftG}. Since $(C[u_2,r_2], C[v,v_1])$ is a $(u_2r_2; vv_1)$-linkage in $H$, $G$ contains a $(u_2u; vv_1)$-linkage. By Lemma~\ref{lem:misses1ofv1v2linkage}, $c(u_1u_2;v_1v_2) \in P(G; R, S)$.

Suppose next that $N(A)-V(C)\not=\emptyset$. Let $r\in N(A)-V(C)$.  Let $P$ be a $r\to r_1$ path in $B'$ and let $P_0$ be the concatenation of $P$ and $C[r_1,u_1]$. Then $(P_0, C[v_1,v_2])$ is a $(ru_1; v_1v_2)$-linkage in $H$. By Lemma~\ref{lem:linkageliftG}, $G$ contains a $(uu_1; v_1v_2)$-linkage. Let $P'$ be a $r\to r_1$ path in $B'$ and let $P_1$ be the concatenation of $P'$ and $C[r_1,u_2]$. Then $(P_1, C[v_2,v])$ is a $(ru_2; vv_2)$-linkage in $H$. By Lemma~\ref{lem:linkageliftG}, $G$ contains a $(uu_2; vv_2)$-linkage. Let $P''$ be a $r\to r_2$ path and let $P_2$ be the concatenation of $P''$ and $C[u_2,r_2]$. Then $(P_2,C[v,v_1])$ is a $(ru_2; vv_1)$-linkage in $H$. By Lemma~\ref{lem:linkageliftG}, $G$ contains a $(uu_2; vv_1)$-linkage. By Lemma~\ref{lem:misses1ofv1v2linkage}, $c(u_1u_2;v_1v_2) \in P(G; R, S)$.

Suppose next that $B$ and $C(r_2,r_1)$ contain no vertex of $R$, but that $C(r_2,r_1)$ contains a vertex of $S$, or that $B$ and $C(r_2,r_1)$ contain no vertex of $S$, but that $C(r_2,r_1)$ contains a vertex of $R$.
 We may assume that $B$ and $C(r_2,r_1)$ contain no vertex of $R$, but that $C(r_2,r_1)$ contains a vertex $v\in S$. Since $B$ contains a vertex $R\cup S$, $B$ contains a vertex $u\in S$. Let $V_1$ be the set of vertices $w$ in $G$ such that there exists a $w\to \{u,v\}$ path that is internally disjoint from $\{r_1,r_2\}$. Let $V_2 = V(G)-(V_1-\{r_1,r_2\})$. Let $G_1=G[V_1]$ and let $G_2=G[V_2]$. Then $(G_1,G_2)$ is a $2$-separation. It remains to show that $G_1$ contains a $(r_1u, r_2v)$- and a $(r_2u; r_1v)$-linkage.

Suppose $u\not\in A$ for all $A\in \mathcal{A}$. Since $C[r_2,r_1]\cup B'$ contains a $(ur_1; vr_2)$-linkage, $G_1$ contains a $(ur_1; vr_2)$-linkage. In the same way, $G_1$ contains a $(ur_2; vr_1)$-linkage. Suppose next that $u\in A$ for some $A\in \mathcal{A}$. If $N(A)-V(C)\not=\emptyset$, let $r\in N(A)-V(C)$. Since $C[r_2,r_1]\cup B'$ contains a $(rr_1; vr_2)$-linkage, $G_1$ contains a $(ur_1; vr_2)$-linkage, by Lemma~\ref{lem:linkageliftG}. In the same way, $G_1$ contains a $(ur_2; vr_1)$-linkage. Suppose next that $N(A)-V(C)=\emptyset$. Let $r_1,r_2 \in N(A)$. Let $r_3\in N(A)-\{r_1,r_2\}$. By Lemma~\ref{lem:Aminimal}, $G[A\cup N(A)]$ contains a $(ur_2; r_3r_1)$-linkage and a $(ur_1; r_3r_2)$-linkage. Hence $G_1$ contains a $(ur_2; vr_1)$- and a $(ur_1; vr_2)$-linkage.
\end{proof}

\begin{lemma}\label{lem:2side}
Let $(G, \mathcal{A})$ be a nearly planar graph, where $G$ is $2$-connected, and let $R, S\subseteq V(G)$. Let $H = p(G, \mathcal{A})$ and let $C$ be an oriented cycle of $H$. Suppose $u_1,v_1,u_2,v_2$ are distinct vertices with $u_1,u_2\in R$  and $v_1,v_2\in S$ such that $u_1,v_1,u_2,v_2$ occurs in this order on $C$. Let $B$ be a component of $G-V(C)$ such that $N(B)=\{v_1,v_2\}$.
If $B$ contains a vertex of $R$ and both $C(v_2,v_1)$ and $C(v_1,v_2)$ contain a vertex of $S$, then 
$c(u_1u_2;v_1v_2)\in P(G; R,S)$. 
\end{lemma}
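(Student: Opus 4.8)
The plan is to realize the conclusion through Lemma~\ref{lem:2sidea} (or a relabelled copy of it). I take for the distinguished vertex $u$ the given vertex of $R$ in $B$, and for $v',v''$ the two vertices of $S$ that the hypothesis places on the two arcs into which $v_1,v_2$ cut $C$. Because $B$ is a component of $G-V(C)$ with $N(B)=\{v_1,v_2\}$, the reduced bridge $B'=p(B,\mathcal A')$, where $\mathcal A'=\{A\in\mathcal A:A\subseteq V(B)\}$, contains a $u\to v_1$ and a $u\to v_2$ path with interior in $B$; concatenating these with sub-arcs of $C$ produces every path that has to meet $u$. Exactly as in the proof of Lemma~\ref{lem:misses1ofv1v2}, I may assume $\mathcal A$ is minimal, build each required linkage first in $H=p(G,\mathcal A)$, and then lift it to $G$ by Lemma~\ref{lem:linkageliftH} (for a linkage whose two paths stay on $C\cup B'$) or, when $u$ lies inside some $A\in\mathcal A$, by Lemma~\ref{lem:linkageliftG} after routing to a vertex $r\in N(A)$ (using Lemma~\ref{lem:Aminimal} in the degenerate subcase $N(A)\subseteq V(C)$).

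Write the cyclic order on $C$ as $u_1,v_1,u_2,v_2$, so $C(v_1,v_2)$ is the arc through $u_2$ and $C(v_2,v_1)$ the arc through $u_1$. Each of the five linkages required by Lemma~\ref{lem:2sidea} consists of one path meeting $u$ and one companion path, and I make them disjoint by sending the $u$-path through the gateway $v_1$ or $v_2$ opposite to the companion. For example, for $(uu_1;v'v_1)$ I route $u\to v_2$ through $B'$ and continue along $C[v_2,u_1]$, taking the short arc $C[v',v_1]$ as companion; for $(uu_2;v'v_2)$ I route $u\to v_1$ through $B'$ and continue along $C[v_1,u_2]$, taking as companion the arc from $v'$ through $u_1$ to $v_2$. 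The companion arcs may run through $u_1$ or $u_2$, which is harmless since only their ends must lie in $S$. The single linkage not meeting $u$, namely $(u_2u_1;v'v_1)$, is realized on $C$ alone by the arc $u_2\to v_2\to u_1$ together with the short arc from $v'$ to $v_1$.

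The essential point, and the main obstacle, is the placement of $v'$ and $v''$. The cycle linkage $(u_2u_1;v'v_1)$ exists only if $v'$ lies on one of the two sub-arcs $C(u_1,v_1)$, $C(v_1,u_2)$ incident with $v_1$: otherwise $v_1$ is separated from $v'$ by $u_1$ or $u_2$, so a $v'\to v_1$ path would have to traverse $v_2$ and $B$, which the $u_2\to u_1$ path then cannot avoid. A parallel check of $(uu_1;v''v_1)$ and $(uu_1;v''v_2)$ shows that $v''$ must lie on the arc $C(v_1,v_2)$ through $u_2$. These are precisely the positions demanded by Lemma~\ref{lem:2sidea}, and the hypothesis that \emph{both} arcs carry a vertex of $S$ is exactly what lets us meet them: if some vertex of $S$ lies on a sub-arc incident with $v_1$ I use it as $v'$, take $v''$ to be the vertex of $S$ on the arc through $u_2$, and apply Lemma~\ref{lem:2sidea}; otherwise both given vertices of $S$ are incident with $v_2$, and relabelling $(u_1,v_1,u_2,v_2)$ as $(u_2,v_2,u_1,v_1)$ --- which preserves the cyclic order, $B$, $u$, and $c(u_1u_2;v_1v_2)$ --- reduces to the former case. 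Once the five linkages are in hand, Lemma~\ref{lem:2sidea} yields $c(u_1u_2;v_1v_2)\in P(G;R,S)$; the remaining work is the routine disjointness bookkeeping for the other linkages and the $u\in A$ lifting, both handled as above.
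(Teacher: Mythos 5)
Your plan covers only one of the two cases in the paper's proof, and the case it misses cannot be repaired inside your framework. The hypothesis guarantees a vertex of $S$ on $C(v_2,v_1)$ and one on $C(v_1,v_2)$, but these arcs contain $u_1$ and $u_2$ respectively, and nothing prevents $u_1,u_2\in S$ from being the \emph{only} such vertices: take $C$ to be the four-cycle $u_1v_1u_2v_2$, $B=\{u\}$ joined to $v_1,v_2$, $R=\{u,u_1,u_2\}$, $S=\{u_1,u_2,v_1,v_2\}$. In that situation your dichotomy --- ``some vertex of $S$ lies on a sub-arc incident with $v_1$'' versus ``both given vertices of $S$ are incident with $v_2$'' --- is vacuous, since neither $u_1$ nor $u_2$ lies in the interior of any of the four sub-arcs, and no relabelled copy of Lemma~\ref{lem:2sidea} can be applied either: every admissible choice of $v'$ would have to be $u_1$ or $u_2$ (or $v_1$, $v_2$), and then one of the required linkages, e.g.\ the $(u_2u_1;v'v_1)$-linkage with $v'=u_1$, is degenerate because its two paths share a vertex. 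This is precisely why the paper splits the proof: when $u_1\notin S$ or $u_2\notin S$ it argues essentially as you do via Lemma~\ref{lem:2sidea}, but when $u_1,u_2\in S$ it switches to the different algebraic identity of Lemma~\ref{lem:2sideb}, which uses $u_1,u_2$ themselves as the $S$-ends of the companion paths. Your proposal never invokes Lemma~\ref{lem:2sideb}, so this case is a genuine gap, not bookkeeping.

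There is also a localized error in the case you do treat. You claim the five linkages of Lemma~\ref{lem:2sidea} exist whenever $v'$ lies on $C(u_1,v_1)$ \emph{or} $C(v_1,u_2)$. The second position fails: for the $(uu_2;v'v_2)$-linkage, the $u$--$u_2$ path must leave $B$ through $v_1$ or $v_2$; leaving through $v_2$ it meets the end of the companion path, and leaving through $v_1$ it must either continue along $C[v_1,u_2]$, which contains $v'$, or wrap around through $v_2$. So in the graph $C\cup B$ (which the proof must handle, since nothing else is available) no such linkage exists. When $v'\in C(v_1,u_2)$ one must instead use the copy of Lemma~\ref{lem:2sidea} with $u_1$ and $u_2$ interchanged, and then $v''$ must be taken on the arc through $u_1$, not, as you prescribe, on the arc through $u_2$. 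Only the position $C(u_1,v_1)$, or its mirror $C(v_2,u_1)$ after reversing the orientation of $C$, supports the lemma as you apply it; this is exactly why the paper first reduces to $u_1\notin S$, which lets it place $v'$ on $C(u_1,v_1)$ ``by symmetry''.
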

\begin{proof}
Let $u\in V(B)\cap R$, and let $v', v''\in S$ be vertices in $C(v_2,v_1)$ and $C(v_1,v_2)$, respectively. 

We first assume that $u_1\not\in S$ or $u_2\not\in S$. By symmetry, we may assume that $u_1\not\in S$. By symmetry, we may assume that $v'$ is on $C(u_1,v_1)$.
Let $P_1=C[u_2,u_1]$ and let $Q_1=C[v',v_1]$. Hence $G$ contains a $(u_2u_1; v'v_1)$-linkage.
Let $S_2$ be a $u\to v_1$ path in $B\setminus \{v_2\}$, and let $P_2$ be the concatenation of $C[v_1,u_2]$ and $S_2$. Let $Q_2=C[v_2,v']$. Hence $G$ contains a $(u u_2; v'v_2)$-linkage. Let $S_3$ be a $u\to v_2$ path in $B\setminus \{v_1\}$, and let $P_3$ be the concatenation of $C[v_2,u_1]$ and $S_3$. Let $Q_3=C[v',v_1]$. Hence $G$ contains a $(u u_1; v'v_1)$-linkage.
Let $S_4$ be a $u\to v_1$ path in $B\setminus \{v_2\}$, and let $P_4$ be the concatenation of $S_4$ and $C[u_1,v_1]$.  Let $Q_4=C[v'',v_2]$. Hence $G$ contains a $(u u_1; v''v_2)$-linkage.
Let $S_5$ be a $u\to v_2$ path in $B\setminus\{v_1\}$, and let $P_5$ be the concatenation of $S_5$ and $C[u_1,v_2]$.  Let $Q_5=C[v_1,v'']$. Hence $G$ contains a $(u u_1; v''v_1)$-linkage.
By Lemma~\ref{lem:2sidea}, $c(u_1u_2;v_1v_2)\in P(G; R, S)$.

Next we assume that $u_1, u_2\in S$. Let $P_1$ be a $u_1\to u$ path in $C[u_1,v_2]\cup (B\setminus \{v_1\})$ and let $Q_1=C[v_1,u_2]$. Hence $G$ contains a $(u_1u; v_1u_2)$-linkage. Let $P_2$ be a $u_2\to u$ path in $C[u_2,v_2]\cup (B\setminus \{v_1\})$ and let $Q_2 = C[u_1,v_1]$. Hence $G$ contains a $(u u_2; u_1v_1)$-linkage.
Let $P_3$ be a $u_1\to u$ path in $C[u_1,v_1]\cup (B\setminus \{v_2\})$
and let $Q_3 = C[u_2,v_2]$. Hence $G$ contains a $(u u_1; u_2 v_2)$-linkage.
Let $P_4$ be a $u_2\to u$ path in $C[v_1,u_2]\cup (B\setminus \{v_2\})$ and let $Q_4 = C[v_2,u_1]$. Hence $G$ contains a $(u u_2; u_1 v_2)$-linkage. By Lemma~\ref{lem:2sideb}, $c(u_1u_2;v_1v_2)\in P(G; R, S)$.
\end{proof}

\begin{lemma}\label{lem:midsep}
Let $(G, \mathcal{A})$ be a nearly planar graph, where $G$ is $2$-connected, and let $R, S\subseteq V(G)$. Let $H = p(G, \mathcal{A})$ and let $C$ be an oriented cycle bounding the infinite face of a plane drawing of $H$. Suppose $u_1,v_1,u_2,v_2$ are distinct vertices with $u_1,u_2\in R$  and $v_1,v_2\in S$ such that $u_1,v_1,u_2,v_2$ occurs in this order on $C$.  Let $B$ be a component of $G-V(C)$ such that $N(B)=\{v_1,v_2\}$. Then at least one of the following holds:
\begin{enumerate}[(i)]
\item $c(u_1u_2;v_1v_2)\in P(G; R,S)$,
\item $G$ has a sided $(R,S)$-separation or a sided $(S,R)$-separation, or
\item no vertex of $B$ belongs to $R\cup S$.
\end{enumerate}
\end{lemma}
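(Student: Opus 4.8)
The plan is to use that $N(B)=\{v_1,v_2\}$ makes $\big(G[V(B)\cup\{v_1,v_2\}],\,G-V(B)\big)$ a $2$-separation of $G$ with cut $\{v_1,v_2\}$, and to read the remaining structure off the embedding: since $C$ bounds the infinite face of the planar graph $H$ and $B$ attaches to $C$ only at $v_1,v_2$, the component $B$ separates the two arcs of $C$, so each component of $G-\{v_1,v_2\}$ other than $B$ lies on the $u_1$-side or the $u_2$-side, and in particular $u_1,u_2$ lie in different such components $D_1\ni u_1$ and $D_2\ni u_2$. I would assume $B$ meets $R\cup S$ (otherwise (iii) holds) and split on whether $B$ meets $S$. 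Throughout I use that, by $2$-connectivity, every bridge of the cut $\{v_1,v_2\}$ is joined to both $v_1$ and $v_2$.

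Suppose first $B\cap S\neq\emptyset$ and pick $u\in B\cap S$. Here I would prove conclusion (i) directly from the identity
\[
c(u_1u_2;v_1v_2)=c(u_1u_2;v_1u)-c(u_1u_2;v_2u),
\]
which holds since $(u_1-u_2)\otimes(v_1-v_2)=(u_1-u_2)\otimes\big((v_1-u)-(v_2-u)\big)$. Each summand lies in $P(G;R,S)$ because the corresponding linkage exists: for $i=1,2$, take a $v_i\to u$ path inside $B\cup\{v_i\}$ together with a $u_1\to u_2$ path in $G-V(B)-v_i$. The latter exists because $G-V(B)-v_i$ is connected --- otherwise $v_i$ would be a cutvertex of $G$, contradicting $2$-connectivity --- and it is automatically disjoint from the first path, which lives in $B\cup\{v_i\}$. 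As $v_i,u\in S$ and $u_1,u_2\in R$, these are genuine $(R;S)$-linkages, so $c(u_1u_2;v_iu)\in P(G;R,S)$ and conclusion (i) follows.

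Now suppose $B\cap S=\emptyset$ but $B\cap R\neq\emptyset$, say $u\in B\cap R$. Call a bridge of $\{v_1,v_2\}$ \emph{good} if it is disjoint from $S$ and meets $R$; then $B$ is good. If some bridge $D\neq B$ is good, then $G_1:=B\cup D\cup\{v_1,v_2\}$ and $G_2:=G-(V(B)\cup V(D))$ form a $2$-separation whose $G_1$-interior is $S$-free and contains the distinct $R$-vertices $u\in B$ and an $R$-vertex of $D$; since these lie in different bridges, each joined to both $v_1,v_2$, the linkages $(u\,v_1;\,\cdot\,v_2)$ and $(u\,v_2;\,\cdot\,v_1)$ exist inside $G_1$, and $G_2$ is nontrivial because it still contains $u_1$ or $u_2$. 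This is a sided $(R,S)$-separation, giving (ii). Otherwise every $R$-bridge other than $B$ meets $S$; in particular $D_1$ and $D_2$ each contain an $S$-vertex $v'$ and $v''$. In this case I would return to conclusion (i) through Lemma~\ref{lem:2side}: reroute $C$ within the two sides to a cycle $C'$ of $H$ on which $u_1,v',v_1,u_2,v'',v_2$ occur in this cyclic order and for which $B$ is still a component of $G-V(C')$ with $N(B)=\{v_1,v_2\}$; then $v'\in C'(v_2,v_1)$, $v''\in C'(v_1,v_2)$, $B$ contains the $R$-vertex $u$, and Lemma~\ref{lem:2side} gives $c(u_1u_2;v_1v_2)\in P(G;R,S)$.

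The hard part is this last subcase. The rerouting must genuinely thread the cycle through the interior $S$-vertices while keeping $u_1,u_2$ on the correct arcs and keeping $B$ a $\{v_1,v_2\}$-bridge; this is an ear/bridge-routing argument inside the near-planar embedding, and it is where I expect the real care to be needed. It must also be reconciled with the family $\mathcal A$: the vertices $v',v''$ (indeed any vertex of $R\cup S$) may lie inside a member of $\mathcal A$, so I would, after passing to a minimal refinement of $\mathcal A$, move the paths between $G$ and $H=p(G,\mathcal A)$ using Lemmas~\ref{lem:linkageliftH} and~\ref{lem:linkageliftG}, and when an $S$-vertex of a side lies inside some $A$ I would route through $N(A)$ and lift using Lemma~\ref{lem:Aminimal}, exactly as in the proofs of the preceding lemmas. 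A secondary routine point is to confirm that the sided separations produced above satisfy $|V(G_2)-V(G_1)|>0$ with at least two $R$-vertices strictly inside $G_1$; this holds because the opposite side always retains one of $u_1,u_2$.
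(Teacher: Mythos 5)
Your Case A ($B$ meets $S$) and Case B1 (a second bridge of $\{v_1,v_2\}$ meeting $R$ and missing $S$) are correct. Case A is in fact cleaner than the paper's treatment, which invokes Lemma~\ref{lem:K23d0} (hence planarity, minimality of $\mathcal{A}$, and the lifting lemmas), whereas your two linkages inside $B\cup\{v_i\}$ together with connectedness of $G-V(B)-v_i$ need none of that. Case B1 is a legitimate sided $(R,S)$-separation, granting the fact, which you assert from planarity and which the paper also needs implicitly, that $u_1$ and $u_2$ lie in different components of $G-\{v_1,v_2\}$.

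The genuine gap is Case B2, and it is worse than the ``real care needed'' you anticipate: the plan provably cannot work as stated. Take $C$ with vertices $v_2,x,u_1,y,v_1,u_2,v''$ in this cyclic order, let $B=\{u\}$ with $u$ adjacent to $v_1,v_2$, let $K=\{v'\}$ with $v'$ adjacent to $x,y$, and set $R=\{u,u_1,u_2\}$, $S=\{v_1,v_2,v',v''\}$; this graph is planar (so $\mathcal{A}=\emptyset$), $2$-connected, and falls under your Case B2. Every cycle through $v'$ must use its only neighbors $x$ and $y$, and the rest of such a cycle is a single $x$--$y$ path avoiding $v'$; since $u_1$ also has only the neighbors $x,y$, that path either is $x,u_1,y$ (and the cycle misses $v_1,v_2$) or avoids $u_1$ altogether. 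So no cycle of $H$ contains $u_1,v_1,u_2,v_2$ and $v'$ simultaneously, no rerouted $C'$ to which Lemma~\ref{lem:2side} applies exists, and (as you note) in general $v'$ need not even be a vertex of $H$. One checks moreover that this example admits no sided $(R,S)$- or $(S,R)$-separation, so conclusion (i) is the only one available; it must be proved by exhibiting linkages directly, e.g.\ the five linkages of Lemma~\ref{lem:2sidea}, threading through $K$ via $x,y$ and through $B$ via $v_1,v_2$, combined with the lifting lemmas when $\mathcal{A}\not=\emptyset$. That construction is the missing content of your proof. For comparison, the paper splits instead on whether both open arcs $C(v_1,v_2)$ and $C(v_2,v_1)$ carry a vertex of $S$, so its appeal to Lemma~\ref{lem:2side} requires no rerouting, and it builds the sided separation when one arc is $S$-free; note, however, that on the example above the paper's own separation case also breaks (its $G_1$ contains $v'\in S$), so a completed Case B2 along the lines just indicated would repair both arguments --- but as written your proposal does not prove the lemma.
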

\begin{proof}
Suppose first that $B$ contains a vertex of $S$. Then, by Lemma~\ref{lem:K23d0},  $c(u_1u_2;v_1v_2)\in P(G; R, S)$.

Suppose next that $B$ contains a vertex of $R$.  If both $C(v_1,v_2)$ and $C(v_2,v_1)$ contain a vertex of $S$, then, by Lemma~\ref{lem:2side}, $c(u_1u_2;v_1v_2)\in P(G; R, S)$.
Suppose $C(v_1,v_2)$ or $C(v_2,v_1)$ contains no vertex of $S$; say $C(v_2,v_1)$ contains no vertex of $S$. Let $G_1$ be the subgraph of $G$ consisting of all vertices and edges reachable by a path from $\{u_1\}\cup V(B)$ that has no vertex in $\{v_1,v_2\}$, and let $G_2 := G[(V(G)\setminus V(G_1))\cup \{v_1,v_2\}]$. Since $G_1$ contains a $(uv_1; u_1v_2)$- and a $(uv_2; u_1v_1)$-linkage, $(G_1, G_2)$ is a sided $(R,S)$-separation of $G$. The case where $C(v_1,v_2)$ contains no vertex of $S$ is similar.
\end{proof}

\begin{lemma}\label{internRandS}
Let $(G, \mathcal{A}, t_1,\ldots,t_n)$ be a nearly planar graph, where $G$ is $2$-connected and $\mathcal{A}$ is minimal, and let $R, S\subseteq V(G)$ such that $\{t_1,\ldots,t_n\}\subseteq R\cup S$. Suppose that $t_1,\ldots,t_n$ contains  distinct vertices $u_1,v_1,u_2,v_2$ in this order, where $u_1,u_2\in R$  and $v_1,v_2\in S$. Let $(H, t_1,\ldots,t_n) = p(G, \mathcal{A}, t_1,\ldots,t_n)$ and let $C$ be the cycle bounding the infinite face of a plane drawing of $H$ with $t_1,\ldots,t_n$ on $C$. If $\{t_1,\ldots,t_n\}\subset R\cup S$, then at least one of the following holds:
\begin{enumerate}[(i)]
\item $c(u_1u_2;v_1v_2)\in P(G; R, S)$,
\item there exists a sided $(R,S)$-separation or a side $(S,R)$-separation, or
\item there exists a cyclic ordering $w_1,\ldots,w_{n+1}$ with $\{t_1,\ldots,t_n\}\subset\{w_1,\ldots,w_{n+1}\}\subseteq R\cup S$ such that $t_1,\ldots,t_n$ occur in this ordering in $w_1,\ldots,w_{n+1}$ and $(G, w_1,\ldots,w_{n+1})$ is nearly planar.
\end{enumerate}
\end{lemma}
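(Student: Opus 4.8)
The plan is to locate the ``extra'' vertex guaranteed by the strict inclusion $\{t_1,\ldots,t_n\}\subsetneq R\cup S$ and to feed it, together with the boundary cycle $C$, into the structural Lemmas \ref{lem:K23d0}, \ref{lem:midsep}, and \ref{lem:misses1ofv1v2}. Fix $x\in (R\cup S)\setminus\{t_1,\ldots,t_n\}$. First I would dispose of the easy case $x\in V(C)$: then $x\in V(H)$, so $x$ lies in no member of $\mathcal{A}$ and appears on the boundary of the disc strictly between two consecutive vertices of $t_1,\ldots,t_n$; inserting $x$ there produces a cyclic ordering $w_1,\ldots,w_{n+1}$ with $\{t_1,\ldots,t_n\}\subset\{w_1,\ldots,w_{n+1}\}\subseteq R\cup S$ in which the $t_i$ keep their order and $(G,w_1,\ldots,w_{n+1})$ is nearly planar, which is outcome (iii). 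Hence I may assume $x\notin V(C)$, so $x$ lies in some component $B$ of $G-V(C)$, and $B$ contains the vertex $x\in R\cup S$.

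The remaining argument is a trichotomy on how $N(B)$ meets the chord determined by the two $S$-vertices $v_1,v_2$. The driving observation is the $R\leftrightarrow S$ symmetry of the whole set-up: swapping the two tensor factors identifies $P(G;R,S)$ with $P(G;S,R)$, sends $c(u_1u_2;v_1v_2)$ to $c(v_1v_2;u_1u_2)$, and preserves the alternative ``sided $(R,S)$- or sided $(S,R)$-separation''; this lets me assume $x\in R$, the case $x\in S$ being the mirror image obtained by interchanging $R$ with $S$ and the chord $v_1v_2$ with $u_1u_2$. Assuming $x\in R$, the three possibilities are: (a) $B$ has a neighbour in each of the two open arcs into which $v_1,v_2$ split $C$, in which case $B$ contains the $R$-vertex $x$ and Lemma \ref{lem:K23d0} yields outcome (i); (b) $N(B)=\{v_1,v_2\}$, in which case Lemma \ref{lem:midsep} applies, and since $B$ contains $x\in R\cup S$ its escape clause ``no vertex of $B$ in $R\cup S$'' is excluded, forcing (i) or (ii); and (c) $N(B)$ is contained in one of the two closed arcs bounded by $v_1,v_2$ and has a neighbour in the corresponding open arc (after the order-preserving relabelling $u_1\leftrightarrow u_2$, $v_1\leftrightarrow v_2$, which fixes the target tensor, I may take this arc to be $C[v_2,v_1]$), which is exactly the hypothesis of Lemma \ref{lem:misses1ofv1v2}. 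A short check, using that $G$ is $2$-connected so $|N(B)|\ge 2$, shows these three cases are exhaustive.

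The step I expect to be the main obstacle is case (c), namely reconciling Lemma \ref{lem:misses1ofv1v2} with the need to use the specific vertex $x$. That lemma re-selects an \emph{innermost} component---the one minimizing the length of $C[r_2,r_1]$---and among its four conclusions allows the alternative ``no vertex of $B$ belongs to $R\cup S$'' for the re-selected component, which is \emph{not} one of the three outcomes I want and need not be satisfied by the component carrying $x$. To close this gap I would repeat the selection argument in the proof of Lemma \ref{lem:misses1ofv1v2} verbatim, but minimize the arc length only over those components whose vertex set meets $R\cup S$; since $B$ itself is such a component, this innermost $R\cup S$-component is well defined, its ``no vertex in $R\cup S$'' alternative is vacuous, and the routing in the proof of Lemma \ref{lem:misses1ofv1v2} then delivers (i), (ii), or the extended ordering (iii). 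The routine parts---checking that the insertion in the case $x\in V(C)$ really produces a nearly planar ordering, and that the interior-neighbour hypothesis holds in case (c)---follow the patterns already set up in Lemmas \ref{lem:linkageliftG} and \ref{lem:misses1ofv1v2}, so I would not dwell on them.
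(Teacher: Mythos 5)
Your proposal is correct and follows essentially the same route as the paper: the paper's proof likewise splits on whether a vertex of $(R\cup S)\setminus\{t_1,\ldots,t_n\}$ lies on $C$ or in a component $B$ of $G-V(C)$, reduces by the $R\leftrightarrow S$ symmetry to the case where $B$ carries a vertex of $R$, and then applies Lemmas~\ref{lem:K23d0}, \ref{lem:midsep} and \ref{lem:misses1ofv1v2} according to the same trichotomy on $N(B)$. Your extra repair in case (c) is not actually needed to prove \emph{this} lemma: alternative (4) of Lemma~\ref{lem:misses1ofv1v2}, as stated, refers to the very component $B$ fixed in its hypothesis, so applying that lemma to the component containing $x$ already rules (4) out (the re-selection of an innermost component that worried you is an issue internal to the proof of Lemma~\ref{lem:misses1ofv1v2}, not an obstacle to citing its statement, and the paper cites it in exactly this way).
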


\begin{proof} 
Suppose $\{t_1,\ldots,t_n\}\subset R\cup S$. If no component $B$ of $G-V(C)$ contains a vertex of $R\cup S$, then $R\cup S\subseteq V(C)$.
We may assume that a component $B$ of $G-V(C)$ contains a vertex $u$ of $R$;  the case where a component $B$ of $G-V(C)$ contains a vertex $v$ in $S$ is similar. If a neighbor of $B$ belongs to $C(v_1,v_2)$ and a neighbor of $B$ belongs to $C(v_2,v_1)$, then, by Lemma~\ref{lem:K23d0},  $c(u_1u_2;v_1v_2)\in P(G; R, S)$. 

We may therefore assume that $N(B)\subseteq C[v_2,v_1]$. Suppose $N(B)=\{v_1,v_2\}$. By Lemma~\ref{lem:midsep}, $c(u_1u_2;v_1v_2)\in P(G; R,S)$ or $G$ has a sided $(R,S)$-separation or a sided $(S,R)$-separation.

We may therefore assume that a neighbor of $B$ belongs to $C(v_2,v_1)$. 
By Lemma~\ref{lem:misses1ofv1v2}, $c(u_1u_2;v_1v_2)\in P(G; R, S)$,  $G$ has a sided $(R,S)$-separation or a sided $(S,R)$-separation, or there exists a cyclic ordering $w_1,\ldots,w_{n+1}$ with $\{t_1,\ldots,t_n\}\subset \{w_1,\ldots,w_{n+1}\}\subseteq R\cup S$ such that $(G, w_1,\ldots,w_{n+1})$ is nearly planar and $t_1,\ldots,t_n$ occur in this ordering in $w_1,\ldots,w_{n+1}$. 
\end{proof}

\begin{lemma}\label{lem:Kuratowskiconnected2}
Let $G$ be a $2$-connected graph and let $R, S\subseteq V$, where $R\cup S$ has at least four vertices. Let $u_1,u_2,u_3\in R\cap S$ be distinct vertices. Suppose $c(u_1, u_2, u_3)\not\in P(G; R, S)$. Then there exist distinct vertices $u_4,u_5\in R$ and distinct vertices $v_1,v_2\in S$ such that $c(u_4u_5;v_1v_2)\not\in P(G; R, S)$ and $c(u_1, u_2, u_3)+c(u_4u_5;v_1v_2)\in P(G; R, S)$. 
\end{lemma}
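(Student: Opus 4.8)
The plan is to reduce everything to one algebraic identity plus a single fact about $2$-connected graphs.

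First I would record a clean logical reduction: it suffices to produce \emph{one} edge tensor $c(u_4u_5;v_1v_2)$ (with $u_4,u_5\in R$ distinct and $v_1,v_2\in S$ distinct) for which $c(u_1,u_2,u_3)+c(u_4u_5;v_1v_2)\in P(G;R,S)$. The other required property, $c(u_4u_5;v_1v_2)\notin P(G;R,S)$, is then automatic: if both $c(u_1,u_2,u_3)+c(u_4u_5;v_1v_2)$ and $c(u_4u_5;v_1v_2)$ lay in $P(G;R,S)$, then so would $c(u_1,u_2,u_3)$, contrary to hypothesis. So the entire task is to find a single edge tensor congruent to $c(u_1,u_2,u_3)$ modulo $P(G;R,S)$.

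For the algebraic core, fix any $w\in(R\cup S)\setminus\{u_1,u_2,u_3\}$, which exists because $|R\cup S|\ge 4$; by the symmetry exchanging $R$ and $S$, I may assume $w\in R$. Expanding both sides, one verifies the identity
\[
c(u_1,u_2,u_3)=c(u_1w;u_2u_3)+c(u_2w;u_3u_1)+c(u_3w;u_1u_2),
\]
whose three summands are honest edge tensors in $L(R,S)$, since $w\ne u_1,u_2,u_3$ keeps the two $R$-entries disjoint from the two $S$-entries in each term. The check is immediate: the three $w\otimes(\cdot)$ contributions cancel because $(u_2-u_3)+(u_3-u_1)+(u_1-u_2)=0$, while the rest sum to $u_1\otimes(u_2-u_3)+u_2\otimes(u_3-u_1)+u_3\otimes(u_1-u_2)=c(u_1,u_2,u_3)$. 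Now each summand $c(u_iw;u_ju_k)$ lies in $P(G;R,S)$ as soon as $G$ contains a $(u_iw;u_ju_k)$-linkage, i.e. disjoint $u_i\to w$ and $u_j\to u_k$ paths. Hence, if at least two of the three linkages $(u_1w;u_2u_3)$, $(u_2w;u_3u_1)$, $(u_3w;u_1u_2)$ exist — say the last two — then $c(u_2w;u_3u_1),c(u_3w;u_1u_2)\in P(G;R,S)$ and the identity gives
\[
c(u_1w;u_2u_3)=c(u_1,u_2,u_3)-c(u_2w;u_3u_1)-c(u_3w;u_1u_2)\equiv c(u_1,u_2,u_3)\pmod{P(G;R,S)}.
\]
Taking $u_4=w$, $u_5=u_1\in R$ and $v_1=u_2$, $v_2=u_3\in S$ (distinct within each pair, as required), the target $c(u_4u_5;v_1v_2)=c(wu_1;u_2u_3)=-c(u_1w;u_2u_3)$ satisfies $c(u_1,u_2,u_3)+c(u_4u_5;v_1v_2)\in P(G;R,S)$, and the reduction above finishes the proof. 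The case $w\in S$ is identical after transposing the tensor factors: the identity only picks up an overall sign, the three relevant linkages are literally the same, and the target edge tensor again has its two $R$-entries in $\{u_1,u_2,u_3\}$ and its two $S$-entries among $u_i,w$.

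The main obstacle is thus the purely graph-theoretic claim that, in a $2$-connected graph, at least two of the three linkages $(u_iw;u_ju_k)$ exist. I would attack this by choosing a cycle through as many of $u_1,u_2,u_3,w$ as possible. If all four lie on a common cycle $C$, then among the three ways of splitting them into two pairs exactly two are non-crossing along $C$, and those two are realized by the corresponding disjoint arc-pairs of $C$ — giving the two required linkages and leaving the single crossing pairing as the one equivalent to $c(u_1,u_2,u_3)$. When the four terminals do not lie on a common cycle, I would invoke $2$-connectivity and Menger's theorem to attach the off-cycle terminals to a cycle through the others by two disjoint paths, and route two of the pairings through a common hub, exactly as the two disjoint pairs can be sent through the two sides of a $2$-cut; the complete bipartite configuration $K_{2,3}$ is the model case, where all three linkages in fact survive. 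Carrying out this case analysis so that at least two linkages are guaranteed in every $2$-connected $G$ is the heart of the argument, and I expect it, rather than the bookkeeping above, to be where the real work lies.
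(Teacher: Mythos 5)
Your algebraic skeleton coincides with the paper's. The identity
\[
c(u_1,u_2,u_3)=c(u_1w;u_2u_3)+c(u_2w;u_3u_1)+c(u_3w;u_1u_2)
\]
is, after rewriting each summand as $\pm c(wu_i;\cdot\,\cdot)$, literally the identity $c(vz_2;z_1z_3)+c(vz_1;z_3z_2)=c(z_1,z_2,z_3)+c(vz_3;z_1z_2)$ that the paper uses, and your opening reduction (that $c(u_4u_5;v_1v_2)\notin P(G;R,S)$ is automatic once the sum lies in $P(G;R,S)$, because $P(G;R,S)$ is closed under subtraction) is exactly how the paper concludes as well. The sign bookkeeping in both the $w\in R$ and $w\in S$ cases is correct.

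The genuine gap is the step you yourself flag as ``the heart of the argument'': you never prove that in a $2$-connected graph at least two of the three linkages $(u_1w;u_2u_3)$, $(u_2w;u_3u_1)$, $(u_3w;u_1u_2)$ exist, and the route you sketch (a cycle through as many terminals as possible, then attaching off-cycle terminals and routing through a ``hub'') is both incomplete and harder than necessary: four terminals of a $2$-connected graph need not lie on a common cycle, and the ensuing case analysis is precisely the kind of argument that balloons. The missing idea, and the paper's actual main step, is to apply Menger's theorem directly to the terminal pairs. Since $G$ is $2$-connected, no single vertex can separate $\{w,u_i\}$ from $\{u_j,u_k\}$ (any candidate cut vertex is avoided by a path between the two remaining terminals in $G$ minus that vertex), so there are two vertex-disjoint paths from $\{w,u_i\}$ to $\{u_j,u_k\}$; shortening them so they meet the terminals only at their ends, they form either a $(wu_j;u_iu_k)$- or a $(wu_k;u_iu_j)$-linkage. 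Thus each of the three splits of $\{w,u_1,u_2,u_3\}$ into two pairs certifies one of the three pairings, and each pairing can be certified by only two of the three splits; if at most one pairing admitted a linkage, the split whose two possible outcomes are the two missing pairings would yield a contradiction. Hence at least two of the three linkages exist. (The paper invokes Menger only twice and fixes labels $z_1,z_2,z_3$ accordingly; strictly speaking a third application is needed in the case where both of its applications return the same pairing, which the pigeonhole argument above handles.) With this substituted for your unfinished case analysis, your proof is complete and is essentially the paper's proof.
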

\begin{proof}
Since $R\cup S$ has at least four vertices, there exists a vertex $v\in R\cup S$ distinct from $u_1,u_2,u_3$. By symmetry, we may assume that $v\in R$. Since $G$ is $2$-connected, there exist two vertex-disjoint paths $P_1,Q_1$ from $\{v,u_1\}$ to $\{u_2,u_3\}$ and two vertex-disjoint paths $P_2,Q_2$ from $\{v,u_2\}$ to $\{u_1,u_3\}$. Let $v$ and $z_2\in \{u_2,u_3\}$ be the ends of $P_1$, and let $z_1$ and $z_3$ be the ends of $Q_1$. Let $v$ and $z_1$ be the ends of $P_2$, and let $z_2$ and $z_3$ be the ends of $Q_2$. Then $\{z_1,z_2,z_3\} = \{u_1,u_2,u_3\}$, and
\begin{equation*}
c(vz_2; z_1z_3) + c(vz_1; z_3z_2) = c(z_1,z_2,z_3) + c(vz_3; z_1z_2).
\end{equation*}
Since either $c(z_1,z_2,z_3) = c(u_1, u_2, u_3)$ or $c(z_1,z_2,z_3) = -c(u_1, u_2, u_3)$, $c(z_1,z_2,z_3)\not\in P(G; R, S)$. Since $c(vz_2; z_1z_3), c(vz_1; z_3z_2)\in P(G; R, S)$, we obtain that $c(vz_3; z_1z_2)\not\in P(G; R, S)$.
\end{proof}

In the proof of the Theorem~\ref{thm:3planarboundary}, we use the following theorem; see \cite{Seymour80}.

\begin{theorem}\label{thm:3planar}
Let $G=(V,E)$ be a graph and let $s_1,s_2,t_1,t_2$ be vertices of $G$. Then $G$ contains no  $(s_1t_1;s_2t_2)$-linkage if and only if $(G; s_1, s_2, t_1, t_2)$ is nearly planar.
\end{theorem}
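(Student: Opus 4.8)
The plan is to prove the two implications separately, since the statement is an equivalence. Throughout, write $s_1,s_2,t_1,t_2$ for the four terminals; in a nearly planar witness they occur on the boundary of the disc in the cyclic order $s_1,s_2,t_1,t_2$, so that $s_1,t_1$ separate $s_2,t_2$ on the boundary circle, while the desired linkage pairs $s_1$ with $t_1$ and $s_2$ with $t_2$.

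For the direction that near-planarity forbids a linkage, I would argue topologically. Suppose $(G,\mathcal{A},s_1,s_2,t_1,t_2)$ is nearly planar and, for a contradiction, that $(P_1,P_2)$ is an $(s_1t_1;s_2t_2)$-linkage. Since no terminal lies in any $A\in\mathcal{A}$, every maximal subpath of $P_1$ or $P_2$ inside $A\cup N(A)$ has both ends in $N(A)$; as $|N(A)|\le 3$ and $P_1,P_2$ are disjoint, at most one of them can traverse $A$ (a traversal consumes two of the at most three vertices of $N(A)$). Replacing that subpath by the corresponding clique edge of $H=p(G,\mathcal{A})$ — present, and facial when $|N(A)|=3$ — projects the linkage to an $(s_1t_1;s_2t_2)$-linkage in the plane graph $H$, as in Lemma~\ref{lem:linkageliftH} run backwards. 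But in the disc the $s_1$–$t_1$ path separates the boundary into two arcs, one carrying $s_2$ and the other $t_2$; by the Jordan curve theorem the $s_2$–$t_2$ path must meet it, contradicting disjointness.

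For the converse — no linkage forces near-planarity — I would induct on $|V(G)|+|E(G)|$, doing the connectivity reductions first. If $G$ has a cutvertex or a $2$-cut, or more generally a separation $(G_1,G_2)$ of width at most $3$ whose side $G_2$ omits all four terminals and satisfies $|V(G_2)\setminus V(G_1)|\ge 1$, then I would absorb $A:=V(G_2)\setminus V(G_1)$ into the separated family and pass to $p(G,\{A\})$, the clique on $N(A)$. The point to verify is that the quotient admits an $(s_1t_1;s_2t_2)$-linkage exactly when $G$ does: a quotient linkage lifts by routing each clique edge inside $A\cup N(A)$, and conversely a linkage in $G$ meets $A$ in at most one traversal because $|N(A)|\le 3$, which the triangle reproduces. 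Any nearly planar witness for the smaller graph then extends to $G$, the added triangle being forced to be facial in the disc drawing. Iterating, we reach the case in which $G$ has no such reducible separation, i.e.\ $G$ is internally $4$-connected with all four terminals present.

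The crux is this highly connected core case, and it is the genuine content of the two-disjoint-paths theorem of Seymour, Shiloach and Thomassen. Here I would add the four edges of the cycle $s_1s_2t_1t_2$ and show that, absent a linkage, the augmented graph is planar with this $4$-cycle bounding a face (equivalently $\mathcal{A}=\emptyset$ and the terminals appear on the outer boundary in the order $s_1,s_2,t_1,t_2$). The mechanism is that non-planarity of a sufficiently connected augmented graph yields a $K_5$- or $K_{3,3}$-subdivision positioned so that two of the terminal-to-terminal routings are realized disjointly; concretely one invokes a high-connectivity $2$-linkedness argument relative to the four prescribed vertices. This extraction of disjoint paths from non-planarity in the internally $4$-connected core is the main obstacle, and it is precisely the step I would import from \cite{Seymour80} rather than reprove. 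The remainder is the bookkeeping that re-expresses Seymour's reduced planar obstruction in the \emph{nearly planar} language of this paper, together with the verification that the reductions above preserve the non-existence of a linkage in both directions.
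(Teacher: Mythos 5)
The paper does not prove this statement at all: it is the classical two-disjoint-paths theorem of Seymour/Shiloach/Thomassen, stated only with the pointer ``see \cite{Seymour80}'', so there is no internal proof to compare yours against. Your proposal is in substance the same move --- the Jordan-curve direction and the small-separation reductions you sketch are correct (if routine, and with some glossed details such as making the added triangles facial), while the one genuinely hard step, extracting a linkage from non-planarity of the internally $4$-connected core, you explicitly import from that same reference --- so your treatment ultimately coincides with the paper's reliance on \cite{Seymour80}.
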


\begin{proof}[Proof of Theorem~\ref{thm:3planarboundary}]
Since $L(R,S)-P(G; R, S)\not=\emptyset$, there exist vertices $u_1,u_2\in R$ and vertices $v_1,v_2\in S$ such that $c(u_1u_2;v_1v_2)\in L(R,S)-P(G; R, S)$, or there exists vertices $u_1,u_2,u_3\in R\cap S$ such that $c(u_1, u_2, u_3)\in L(R, S)-P(G; R, S)$, by Lemma~\ref{lem:circuitandkurcycles}. 

If $c(u_1, u_2, u_3)\in L(R, S)-P(G; R, S)$ and $|R\cup S|=3$, then $(G; u_1,u_2,u_3)$ is nearly planar and $R$ and $S$ cross.
If $c(u_1,u_2, u_3)$ and $|R\cup S|>3$, then there exist vertices $u_4, u_5\in R$ and vertices $v_1,v_2\in S$ such that $c(u_4u_5;v_1v_2)\in L(R, S)-P(G; R, S)$, by Lemma~\ref{lem:Kuratowskiconnected2}. Hence we may assume that $c(u_1u_2; v_1v_2)\not\in P(G; R, S)$. 
By Theorem~\ref{thm:3planar}, $(G; u_1, v_1, u_2, v_2)$ is nearly planar. By Lemma~\ref{internRandS}, either $G$ has a sided $(R,S)$-separation or a side $(S,R)$-separation, or there exists a cyclic ordering $w_1,\ldots,w_n$ of the vertices in $R\cup S$ such that $u_1,v_1,u_2,v_2$ occur in this order in $w_1,\ldots,w_n$ and $(G, w_1,\ldots,w_n)$ is nearly planar.
Also $R$ and $S$ cross.
\end{proof}

\section{Proof of Theorem~\ref{thm:K33}}

\begin{lemma}
Let $(G, \mathcal{A}, r_1,\ldots,r_n)$ be a $2$-connected nearly planar graph. If $u_1,u_2,v_1,v_2$ are distinct vertices occurring in this order in $r_1,\ldots,r_n$, then there exists a $(u_1u_2; v_1v_2)$-linkage in $G$.
\end{lemma}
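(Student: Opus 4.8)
The plan is to pass to the plane graph $H := p(G,\mathcal{A})$, read the desired linkage off its outer boundary cycle, and then lift it back to $G$. Throughout I keep the disc $D$ in which $H$ is drawn, with $r_1,\ldots,r_n$ appearing on $\partial D$ in the specified cyclic order.

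First I would reduce to the case that $\mathcal{A}$ is minimal, so that Lemma~\ref{lem:linkageliftH} becomes available. If some $A\in\mathcal{A}$ is not minimal, the definition supplies a family $\mathcal{H}\not=\{A\}$ of pairwise disjoint subsets of $A$ with $(G[A\cup N(A)],\mathcal{H},a_1,\ldots,a_m)$ nearly planar, where $\{a_1,\ldots,a_m\}=N(A)$. Replacing $A$ by $\mathcal{H}$ in $\mathcal{A}$ keeps the family separated, and gluing the local disc drawing into the edge or facial triangle formerly occupied by $A$ keeps $(G,\mathcal{A}',r_1,\ldots,r_n)$ nearly planar. Since $r_i\notin A$ for every $i$, the terminals are untouched and remain on $\partial D$ in the same order. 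Iterating this refinement, we may assume $\mathcal{A}$ is minimal.

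Next I would pin down the planar structure of $H$. Since $G$ is $2$-connected and $A\not=\emptyset$, no single vertex can separate $A$ from the rest, so $|N(A)|\ge 2$ for each $A\in\mathcal{A}$; deleting $A$ and adding a clique on $N(A)$ therefore preserves $2$-connectivity, and $H$ is $2$-connected. Consequently, in the plane drawing of $H$ in $D$ the unbounded face is bounded by a cycle $C_0$. As $r_1,\ldots,r_n$ lie on $\partial D$, they lie on the boundary of the unbounded face and hence on $C_0$, and the order in which they are met along $C_0$ is exactly the cyclic order $r_1,\ldots,r_n$. In particular $u_1,u_2,v_1,v_2$ occur on $C_0$ in this cyclic order.

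Finally I would extract and lift the linkage. Orient $C_0$ so that $u_1,u_2,v_1,v_2$ are encountered in this order. Because none of $v_1,v_2$ lies strictly between $u_1$ and $u_2$ and none of $u_1,u_2$ lies strictly between $v_1$ and $v_2$, the arcs $C_0[u_1,u_2]$ and $C_0[v_1,v_2]$ are vertex-disjoint; thus $(C_0[u_1,u_2],\,C_0[v_1,v_2])$ is a $(u_1u_2;v_1v_2)$-linkage in $H$. Applying Lemma~\ref{lem:linkageliftH} then yields a $(u_1u_2;v_1v_2)$-linkage in $G$, as required. I expect the genuine work to lie in the first two steps — verifying that a minimal refinement exists while preserving both near-planarity and the boundary order, and confirming that the combinatorial cyclic order of the $r_i$ on $\partial D$ matches their order along the actual outer cycle $C_0$ of $H$ — whereas the extraction of the two arcs is immediate once the ``in this order'' (non-crossing) hypothesis is in hand.
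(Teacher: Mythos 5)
The paper states this lemma without any proof, so there is nothing of the authors' to compare against; judged on its own merits, your proposal has one genuine gap, and it sits exactly in the step you treat first. Steps 2--4 are sound: $H=p(G,\mathcal{A})$ is $2$-connected, its outer face is bounded by a cycle $C_0$ carrying $r_1,\ldots,r_n$ in the prescribed cyclic order, and the arcs $C_0[u_1,u_2]$, $C_0[v_1,v_2]$ form a $(u_1u_2;v_1v_2)$-linkage in $H$. The gap is the reduction to minimal $\mathcal{A}$. Your claim that replacing a non-minimal $A$ by a witnessing family $\mathcal{H}$ preserves near-planarity is false: gluing the local disc drawing into the face bounded by the triangle on $N(A)$ destroys that face, and that face may be exactly the one that a \emph{different} member of $\mathcal{A}$ needs for its own facial-triangle condition. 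Concretely, let $G$ consist of the cycle $r_1r_2r_3r_4$, vertices $a_1,a_2,a_3$ with edges $a_ir_i$ for $i\le 3$, a vertex $x_1$ adjacent to $a_1,a_2,a_3$, a vertex $x_2$ adjacent to $a_1,a_2$, and a vertex $y$ adjacent to $a_1,a_2,a_3$; put $\mathcal{A}=\{\{x_1,x_2\},\{y\}\}$. Then $G$ is $2$-connected and $(G,\mathcal{A},r_1,r_2,r_3,r_4)$ is nearly planar: $p(G,\mathcal{A})$ is the triangle $a_1a_2a_3$ plus the three spokes plus the outer $4$-cycle, and the triangle bounds a face. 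Moreover $\{x_1,x_2\}$ is not minimal, witnessed by $\mathcal{H}=\{\{x_2\}\}$ (the projected local graph is the star at $x_1$ plus the edge $a_1a_2$, drawable in a disc with $a_1,a_2,a_3$ on the boundary, and $|N(\{x_2\})|=2$ imposes no facial condition). But after your replacement $\mathcal{A}'=\{\{x_2\},\{y\}\}$ the vertex $x_1$ survives in $p(G,\mathcal{A}')$ and is adjacent to all of $a_1,a_2,a_3$; in every planar drawing it must lie in the unique face incident to all three $a_i$, namely the interior of the triangle, so $N(\{y\})$ bounds a face in no drawing and $(G,\mathcal{A}',r_1,\ldots,r_4)$ is not nearly planar. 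Thus the iteration breaks down and Lemma~\ref{lem:linkageliftH}, whose hypothesis is minimality, is not available to you; whether some cleverer refinement always reaches a minimal family is itself doubtful (under the paper's definition, a singleton whose three neighbours are pairwise non-adjacent is never minimal if the empty family counts as a witness), so this step needs a genuinely different treatment.

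The good news is that your steps 2--4 can be completed by a weaker, correct reduction. Replace each $A\in\mathcal{A}$ by the vertex sets of the connected components of $G[A]$. The new family is separated, each new neighbourhood is contained in $N(A)$, and the new projection is a subgraph of $p(G,\mathcal{A})$ on the same vertex set, obtained by deleting some clique edges; every required triangle consists of clique edges of a surviving set, so it is still present and still bounds a face in the restricted drawing. Hence near-planarity (with the same $r_1,\ldots,r_n$ on the boundary) persists, and now every clique edge $ab$ coming from a set $A$ lifts to an $a$--$b$ path whose interior lies in $A$. Lift the two arcs of $C_0$ directly: for each $A$ whose clique edges an arc uses, replace the segment of that arc between the first and the last such edge by a path through $A$, which exists because $G[A]$ is connected. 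Since $|N(A)|\le 3$ and the two arcs are vertex-disjoint, they cannot both use clique edges of the same $A$, and distinct members of the family are disjoint from each other and from $V(H)$; hence the two lifted paths are disjoint, giving the desired $(u_1u_2;v_1v_2)$-linkage in $G$ with no appeal to minimality at all.
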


\begin{lemma}\label{lem:skewK33}
Let $(G, \mathcal{A}, t_1,\ldots,t_n)$ be a $2$-connected nearly planar graph, and let $R,S \subseteq \{t_1,\ldots,t_n\}$.
Suppose $u_1,v_1,u_2,v_2\in R\cap S$. If $u_1,v_1,u_2,v_2$ are distinct vertices occurring in this order in $t_1,\ldots,t_n$, then $c(u_1u_2;v_1v_2) + c(v_1v_2;u_1u_2) \in P(C; R, S)$.
\end{lemma}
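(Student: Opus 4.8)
The plan is to reduce the claim to one algebraic identity in $\mathbb{Z}\langle R\rangle\otimes\mathbb{Z}\langle S\rangle$ supplemented by two applications of the lemma immediately preceding it. The essential observation is that all four vertices $u_1,v_1,u_2,v_2$ lie in $R\cap S$, so a single pair of disjoint paths joining two of them to the other two can be read as an $(R;S)$-linkage in \emph{two} ways: whichever path we declare to be the $R$-path, the other is automatically an $S$-path. Thus one geometric linkage produces both $c(ab;cd)$ and $c(cd;ab)$ in $P(G;R,S)$, which is what lets me realize the two crossing terms $c(u_1u_2;v_1v_2)$ and $c(v_1v_2;u_1u_2)$ even though neither corresponding linkage exists in a nearly planar graph.

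First I would record the identity
\begin{equation*}
c(u_1u_2;v_1v_2)+c(v_1v_2;u_1u_2)
= c(u_1v_1;u_2v_2)+c(u_2v_2;u_1v_1)+c(u_1v_2;v_1u_2)+c(v_1u_2;u_1v_2).
\end{equation*}
To verify it, write $\{x,y\}:=x\otimes y+y\otimes x$ for the symmetrization. Both sides are then symmetric tensors, and, with $a=u_1,\,b=v_1,\,c=u_2,\,d=v_2$, the identity becomes $\{a-c,b-d\}=\{a-b,c-d\}+\{a-d,b-c\}$, which follows by expanding the symmetric bilinear form $\{\cdot,\cdot\}$: the cross terms $\{a,c\}$ and $\{b,d\}$ cancel and the remaining four terms match. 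This step is routine once the grouping of the right-hand side into the two symmetric pairs has been chosen.

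Next I would produce the two required linkages. Since $u_1,v_1,u_2,v_2$ occur in this cyclic order in $t_1,\ldots,t_n$, the preceding lemma, applied to the $2$-connected nearly planar graph $(G,\mathcal{A},t_1,\ldots,t_n)$, yields a $(u_1v_1;u_2v_2)$-linkage $(P,Q)$. Because $u_1,v_1\in R$ and $u_2,v_2\in S$, reading $P$ as the $R$-path gives $c(u_1v_1;u_2v_2)\in P(G;R,S)$; and because $u_2,v_2\in R$ and $u_1,v_1\in S$, the \emph{same} disjoint pair read as $(Q,P)$ gives $c(u_2v_2;u_1v_1)\in P(G;R,S)$. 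Applying the lemma once more to the rotated (hence identical) cyclic order $v_2,u_1,v_1,u_2$ yields a $(u_1v_2;v_1u_2)$-linkage, and the same double reading produces $c(u_1v_2;v_1u_2)$ and $c(v_1u_2;u_1v_2)$ in $P(G;R,S)$. Summing the four and invoking the displayed identity places $c(u_1u_2;v_1v_2)+c(v_1v_2;u_1u_2)$ in $P(G;R,S)$.

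I do not expect a genuine obstacle: the only real content is spotting the decomposition into two non-crossing symmetric pairs and noticing that membership in $R\cap S$ is precisely what turns each pair of disjoint paths into two elements of $P(G;R,S)$. The most error-prone point is keeping the orientations and signs in the displayed identity consistent, so I would confirm it via the symmetrization computation above rather than by a term-by-term expansion.
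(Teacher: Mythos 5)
Your proof is correct and follows essentially the same route as the paper's: the paper likewise obtains the two non-crossing linkages $(u_1v_1;u_2v_2)$ and $(v_1u_2;v_2u_1)$ from the preceding lemma, reads each one twice (swapping which path plays the $R$-role, exactly your $R\cap S$ observation), and concludes via an algebraic identity that, after adjusting sign conventions, coincides term-by-term with your displayed decomposition. Your symmetrization argument $\{a-c,b-d\}=\{a-b,c-d\}+\{a-d,b-c\}$ is just a cleaner way of verifying the same identity, which the paper states without proof (and with an apparent typo, $c(v_1u_2;v_2u_2)$ for $c(v_1u_1;v_2u_2)$).
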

\begin{proof}
Since $(G, \mathcal{A}, t_1,\ldots,t_n)$ is $2$-connected, $G$ contains a $(u_1v_1; u_2v_2)$-linkage, a $(v_1u_2; v_2u_1)$-linkage, a $(u_2v_2; u_1v_1)$-linkage,  and a $(v_2u_1; v_1u_2)$-linkage Hence $c(v_1u_1;v_2u_2)\in P(G; R, S)$, $c(u_2v_1;u_1v_2)\in P(G; R, S)$, $c(v_2u_2;v_1u_1)\in P(G; R, S)$, and $c(u_1v_2;u_2v_1)\in P(G; R, S)$. Then 
\begin{equation*}
c(u_1u_2;v_1v_2) + c(v_1v_2;u_1u_2) = c(v_1u_2;v_2u_2) - c(u_2v_1;u_1v_2) + c(v_2u_2; v_1u_1) - c(u_1v_2;u_2v_1),
\end{equation*}
and hence $c(u_1u_2;v_1v_2) + c(v_1v_2;u_1u_2)\in P(C; R, S)$.
\end{proof}

\begin{lemma}\label{lem:Kuratowskiconnected}
Let $(G, \mathcal{A}, t_1,\ldots,t_n)$ be a $2$-connected nearly planar graph, and let $R,S \subseteq \{t_1,\ldots,t_n\}$. Suppose that $u_1,u_2,\overline{u}_1,\overline{u}_2\in R$ and $v_1,v_2,\overline{v}_1,\overline{v}_2\in S$.
If $u_1,v_1,u_2,v_2$ are distinct vertices, occurring in this order in $t_1,\ldots,t_n$, and $\overline{u}_1,\overline{v}_1,\overline{u}_2,\overline{v}_2$ are distinct vertices, occurring in this order in $t_1,\ldots,t_n$, then 
\begin{equation*}
c(u_1u_2;v_1v_2)-c(\overline{u}_1\overline{u}_2;\overline{v}_1\overline{v}_2) \in P(G; R, S).
\end{equation*}
\end{lemma}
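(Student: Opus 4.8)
The plan is to prove the stronger statement that \emph{any} two positively crossing elementary tensors (both with link $+2$) are congruent modulo $P(G;R,S)$, and to realize the congruence by transforming the first configuration into the second through a sequence of elementary moves, each altering the tensor only by an element of $P(G;R,S)$.

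First I would record the engine of the argument. By the linkage lemma for $2$-connected nearly planar graphs stated at the beginning of this section, any four distinct vertices $a_1,a_2,b_1,b_2$ occurring in this (noncrossing) order in $t_1,\ldots,t_n$ are joined by an $(a_1a_2;b_1b_2)$-linkage; hence, whenever $a_1,a_2\in R$ and $b_1,b_2\in S$, we get $c(a_1a_2;b_1b_2)\in P(G;R,S)$. From this I extract the basic \emph{slide} relations. If $u_1,u_1'\in R$ both lie in the arc of the cyclic order bounded by $v_1$ and $v_2$ that avoids $u_2$, then $c(u_1u_2;v_1v_2)-c(u_1'u_2;v_1v_2)=c(u_1u_1';v_1v_2)$ is a noncrossing $(R,S)$-tensor and so lies in $P(G;R,S)$; thus $c(u_1u_2;v_1v_2)\equiv c(u_1'u_2;v_1v_2)$. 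The analogous statements for $u_2$, and for sliding the $S$-vertices $v_1,v_2$ within their arcs, hold by symmetry. So I may freely slide any one of the four marked vertices to another vertex of the same type lying in the same gap between its two cyclic neighbours.

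Next I would carry out the reduction in two stages. Using the slide relations I first try to bring the $S$-pairs of the two configurations together: as long as there is room in the gaps I move $v_1,v_2$ toward $\bar v_1,\bar v_2$, sliding the $R$-vertices out of the way when necessary; once $\{v_1,v_2\}=\{\bar v_1,\bar v_2\}$, two further slides of the $R$-vertices (noting that in both configurations exactly one $R$-vertex lies in each arc determined by $v_1,v_2$) finish the job. The difficulty is that sliding alone cannot change the cyclic \emph{rotation class} of the four marked vertices, and in ``tight'' configurations the gaps may contain no free vertex of the required type; the smallest instance is $n=4$ with $R=S=\{t_1,\ldots,t_4\}$, where every slide is blocked. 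To pass such configurations I invoke Lemma~\ref{lem:skewK33}, which yields $c(a_1a_2;b_1b_2)\equiv -c(b_1b_2;a_1a_2)\pmod{P(G;R,S)}$ whenever $a_1,b_1,a_2,b_2\in R\cap S$ occur in this order; this is exactly the transposition move that rotates the pattern and interchanges the roles of the $R$-pair and the $S$-pair.

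The main obstacle, then, is the combinatorial claim that slides together with these transposition moves connect any two positively crossing configurations. I would prove this by viewing the four marked vertices as noncrossing tokens of two types on the cycle and arguing that, whenever a token is blocked from sliding to its target by a token of the other type, one may either slide the obstructing token aside or apply a transposition. The delicate point is verifying that the hypothesis of Lemma~\ref{lem:skewK33} is met exactly when a transposition is needed: a marked vertex that serves as an $R$-vertex in one configuration and as an $S$-vertex in the other necessarily lies in $R\cap S$, and I expect that in any tight configuration forcing a rotation the four participating vertices all lie in $R\cap S$, so that the skew move applies (and, when only some of the vertices lie in $R\cap S$, that the two-sided Lemmas~\ref{lem:2sidea} and~\ref{lem:2sideb} supply the needed relation instead). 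Pinning down this case analysis—in particular checking that the preconditions of the skew and two-sided lemmas are always available at the step where they are invoked—is the crux of the proof.
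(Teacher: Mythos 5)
You have assembled the right building blocks---noncrossing linkages give ``slide'' relations, and Lemma~\ref{lem:skewK33} handles the transposed pattern---but the proof is not complete: the combinatorial claim that slides and transpositions connect any two positively crossing configurations is exactly what you leave open, and you say so yourself (``pinning down this case analysis \dots\ is the crux of the proof''). That claim is the entire content of the lemma, so as written the proposal is a plan, not a proof. Moreover, the plan as framed would run into avoidable trouble: thinking of the four marked vertices as tokens that must be physically transported around the cycle, with other tokens ``slid out of the way,'' creates blocked intermediate states and the (unfounded) suspicion that Lemmas~\ref{lem:2sidea} and~\ref{lem:2sideb} are needed. Those two lemmas concern linkage configurations through a bridge $B$ and play no role here.

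The paper's proof shows how to close the gap cleanly, and it is worth seeing why no blocking analysis is needed. One does not move tokens toward target positions; one directly identifies the two configurations, one vertex at a time. If the configurations are not fully transposed (i.e.\ it is not the case that $\{u_1,u_2\}=\{\overline{v}_1,\overline{v}_2\}$ and $\{v_1,v_2\}=\{\overline{u}_1,\overline{u}_2\}$), then after symmetries one may assume $u_1\notin\{\overline{v}_1,\overline{v}_2\}$ and that $u_1$ lies in the arc between $\overline{v}_2$ and $\overline{v}_1$ containing $\overline{u}_1$; then $c(u_1\overline{u}_1;\overline{v}_1\overline{v}_2)$ is noncrossing, hence lies in $P(G;R,S)$, and subtracting it replaces $\overline{u}_1$ by $u_1$ in the second configuration while keeping it positively crossing. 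Iterating (with a short ordering argument guaranteeing that at each stage either $\overline{v}_1$ can be slid to $v_1$ or $\overline{v}_2$ to $v_2$, etc.) aligns all four vertices, regardless of how crowded the cycle is --- a single slide never needs ``room in the gaps,'' since it only requires the target and the sliding vertex to lie on the same side of the other pair. The skew move is needed only in the fully transposed case, and there its hypothesis is automatic: $\{u_1,u_2\}=\{\overline{v}_1,\overline{v}_2\}\subseteq S$ and $\{v_1,v_2\}=\{\overline{u}_1,\overline{u}_2\}\subseteq R$ force all four vertices into $R\cap S$, so Lemma~\ref{lem:skewK33} applies directly. This is the observation that eliminates your worry about configurations in which ``only some of the vertices lie in $R\cap S$'': in those cases the configurations are not fully transposed, and the direct slides already succeed.
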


\begin{proof}
Suppose for a contradiction that the statement of the lemma is false.
Choose the sequences  $u_1,v_1,u_2,v_2$ and $\overline{u}_1,\overline{v}_1,\overline{u}_2,\overline{v}_2$ such that $c(u_1u_2;v_1v_2)-c(\overline{u}_1\overline{u}_2;\overline{v}_1\overline{v}_2) \not\in P(G; R, S)$.

If $\{u_1,u_2\} = \{\overline{v}_1,\overline{v}_2\}$ and $\{v_1,v_2\}= \{\overline{u}_1,\overline{u}_2\}$, then, by Lemma~\ref{lem:skewK33}, $c(u_1u_2;v_1v_2)-c(\overline{u}_1\overline{u}_2;\overline{v}_1\overline{v}_2)\in P(G; R, S)$. We may therefore assume that $\{u_1,u_2\}\not=\{\overline{v}_1,\overline{v}_2\}$ or $\{v_1,v_2\}\not=\{\overline{u}_1,\overline{u}_2\}$. By symmetry, we may assume that $u_1 \not\in \{\overline{v}_1,\overline{v}_2\}$. Furthermore, by symmetry we may also assume that $u_1$ is a vertex in between $\overline{v}_2$ and $\overline{v}_1$ in the cyclic ordering $t_1,\ldots,t_n$. Since $(G, \mathcal{A}, t_1,\ldots,t_n)$ is $2$-connected, $G$ contains a $(u_1\overline{u}_1; \overline{v}_1\overline{v}_2)$-linkage. Hence 
\begin{equation*}
c(u_1\overline{u}_1;\overline{v}_1\overline{v}_2)\in P(G; R, S),
\end{equation*}
 and therefore 
 \begin{equation*}
 c(u_1u_2;v_1v_2) - c(\overline{u}_1\overline{u}_2;\overline{v}_1\overline{v}_2)-c(u_1\overline{u}_1;\overline{v}_1\overline{v}_2)\not\in P(G; R, S).
 \end{equation*}
Since
 \begin{equation*}
 c(u_1u_2;v_1v_2) - c(u_1\overline{u}_2;\overline{v}_1\overline{v}_2) = 
 c(u_1u_2;v_1v_2)  - c(\overline{u}_1\overline{u}_2;\overline{v}_1\overline{v}_2) -c(u_1\overline{u}_1;\overline{v}_1\overline{v}_2),
 \end{equation*}
 we obtain
 \begin{equation*}
  c(u_1u_2;v_1v_2) - c(u_1\overline{u}_2;\overline{v}_1\overline{v}_2) \not\in P(G; R, S).
 \end{equation*}
 We may therefore assume that $u_1=\overline{u}_1$. Since $u_1,v_1,u_2,v_2$ and $u_1,\overline{v}_1,\overline{u}_2,\overline{v}_2$ occur in the same ordering in $t_1,\ldots,t_n$, either $v_1$ is between $u_1$ and $\overline{u}_2$ in the cyclic ordering $t_1,\ldots,t_n$ or $v_2$ is between $\overline{u}_2$ and $u_1$ in the cyclic ordering $t_1,\ldots,t_n$. We may assume that $v_1$ is between $u_1$ and $\overline{u}_2$ in the cyclic ordering $t_1,\ldots,t_n$. Since $(G, \mathcal{A}, t_1,\ldots,t_n)$ is $2$-connected, $G$ contains a $(u_1\overline{u}_2; v_1\overline{v}_1)$-linkage. Hence
 \begin{equation*}
 c(u_1\overline{u}_2;v_1\overline{v}_1)\in P(G; R, S),
 \end{equation*}
and therefore
\begin{equation*}
 c(u_1u_2;v_1v_2) - c(u_1\overline{u}_2;\overline{v}_1\overline{v}_2) - c(u_1\overline{u}_2;v_1\overline{v}_1)\not\in P(G; R, S).
\end{equation*}
Since 
\begin{equation*}
c(u_1u_2;v_1v_2) - c(u_1\overline{u}_2;v_1\overline{v}_2) = c(u_1u_2;v_1v_2) - c(u_1\overline{u}_2;\overline{v}_1\overline{v}_2) - c(u_1\overline{u}_2;v_1\overline{v}_1),
\end{equation*}
we obtain
\begin{equation*}
c(u_1u_2;v_1v_2) - c(u_1\overline{u}_2;v_1\overline{v}_2) \not\in P(G; R, S).
\end{equation*}
We may therefore assume that $\overline{v}_1 = v_1$. In the same manner, $u_2=\overline{u}_2$ and $v_2=\overline{v}_2$. This is a contradiction.
\end{proof}

\begin{proof}[Proof of Theorem~\ref{thm:K33}]
$(\ref{item:i1})\implies (\ref{item:i2})$ Suppose $L(R, S)-P(G; R, S)\not=\emptyset$. Since $G$ is $(R,S)$-connected, there exists a cyclic ordering $t_1,\ldots,t_n$ of the vertices of $R\cup S$ such that  $(G, t_1,\ldots,t_n)$ is nearly planar and $R$ and $S$ cross, by Theorem~\ref{thm:3planarboundary}.

$(\ref{item:i2})\implies (\ref{item:i3})$ Suppose $(G, t_1,\ldots,t_n)$ is nearly planar and $R$ and $S$ cross. Suppose first that $R=S$ and $|R|=3$. Then $P(G; R, S)=\{0\}$ and $L(R, S)$ is generated by $c(u_1,u_2,u_3)$, where $u_1,u_2,u_3$ are the vertices in $R$. Since $\link(c(u_1,u_2,u_3); t_1,\ldots,t_n)\in \{-2,2\}$, we see that in this case, for all $z\in L(R, S)$, $\link(z; t_1,\ldots, t_n) = 0$ if and only if $z \in P(G; R, S)$. Suppose next that $|R\cup S|>3$. Let $c\in L(R, S)$ with $\link(c; t_1,\ldots,t_n) = 0$. By Lemma~\ref{lem:circuitandkurcycles}, $c = \sum_{i=1}^m c_i$, where each $c_i$  is an elementary $(R, S)$-tensor. Observe that $\link(c; t_1,\ldots,t_n) = \sum_{i=1}^m \link(c_i; t_1,\ldots, t_n)$ and, for each elementary $(R, S)$-tensor $d$, $\link(d; t_1,\ldots,t_n)\in \{-2,0,2\}$. If $\link(c_j; t_1,\ldots,t_n)\not=0$ for some $j\in \{1,\ldots,m\}$, then there exists an $c_k$ such that $\link(c_k; t_1,\ldots,t_n) = -\link(c_j; t_1,\ldots,t_n)$. Then $\link(c_j-c_k; t_1,\ldots,t_n) = 0$. By Lemmas~\ref{lem:Kuratowskiconnected} and \ref{lem:Kuratowskiconnected2}, $c_j-c_k\in P(G; R, S)$. Hence $c\in P(G; R, S)$.

Conversely, if $c\in P(G; R, S)$, then $c = \sum_{i} c_i$, where each $c_i$ is of the form $c(u_1u_2;v_1v_2)$ with $u_1,u_2,v_1,v_2$ distinct, and $u_1,u_2\in R$ and $v_1,v_2\in S$, such that there exists an $(u_1u_2; v_1v_2)$-linkage in $G$. Since $\link(c(u_1u_2;v_1v_2); t_1,\ldots,t_n) = 0$ if there exists an $(u_1u_2; v_1v_2)$-linkage in $G$, $\link(c; t_1,\ldots,t_n) = 0$.

$(\ref{item:i3})\implies (\ref{item:i1})$ Suppose that there exists a cyclic ordering $t_1,\ldots,t_n$ of the vertices in $R\cup S$ such that $R$ and $S$ cross and, for all $z\in L(R, S)$, $\link(z; t_1,\ldots, t_n) = 0$ if and only if $z \in P(G; R, S)$. Since $R$ and $S$ cross, either $R=S$ and $|R|=3$, or there exist vertices $u_1,u_2\in R$ and $v_1,v_2\in S$ such that $u_1,v_1,u_2,v_2$ are distinct and occur in this order in the cyclic ordering $t_1,\ldots,t_n$. If $R=S$ and $|R|=3$, let $u_1,u_2,u_3$ be the vertices of $R$. Then $c(u_1,u_2,u_3)\not\in P(G; R, S)$. Suppose next that there exist vertices $u_1,u_2\in R$ and $v_1,v_2\in S$ such that $u_1,v_1,u_2,v_2$ are distinct, occurring in this order in the cyclic ordering $t_1,\ldots,t_n$. Since $\link(c(u_1u_2;v_1v_2); t_1,\ldots,t_n)\not=0$, $c(u_1u_2;v_1v_2)\not\in P(G; R, S)$.
\end{proof}

\bibliographystyle{plain}
\bibliography{biblio}
\end{document}